\def\R{\mathbb R} 
\def\F{\mathcal F} 
\def\LS{\mathcal L^0} 
\def\Rzero{R^0}
\def\Lmaxj{\hat L^{\text{max}}}
\def\Dminstar{D^{\text{min}}(x^*)}
\def\Dmaxstar{D^{\text{max}}(x^*)}
\def\Gstar{G^*}
\def\kA{\bar k}
\def\kN{\hat k}
\def\kj{k^j}
\def\kz{k^z}
\def\Astar{\mathscr Z}
\def\neighj{\dfrac{\tau}{\tau+1} \Dmaxstar}
\def\neighA{\dfrac{\zeta(x^*)}{2L + \maxAL}}
\def\maxAL{\max\biggl\{\dfrac1{A_l},\dfrac{L^{\text{max}}}{2(1-\gamma)}\biggr\}}
\def\minAL{\min\biggl\{A_l,\dfrac{2(1-\gamma)}{L^{\text{max}}}\biggr\}}
\def\T{\max\biggl\{\dfrac1{A_l},\dfrac{L^{\text{max}}}{2\delta(1-\gamma)}\biggr\}}
\def\fdec{\biggl[\biggl(\T + 2 \Lmaxj\biggr) \Rzero + \Gstar\biggr]}
\def\fdecden{\biggl[\biggl(\T + 2 \Lmaxj\biggr) \Rzero + 2\Gstar\biggr]}
\def\rate{\sqrt{n-1}\max\biggl\{\frac{3A_u \sqrt{n-1}}{2\gamma},\frac1{L^{\text{max}}}\biggr\} \fdecden^2}
\DeclarePairedDelimiter{\norm}{\lVert}{\rVert} 
\DeclarePairedDelimiter{\abs}{\lvert}{\rvert} 
\DeclarePairedDelimiter{\innprod}{\langle}{\rangle}
\DeclarePairedDelimiter{\seminormj}{\lVert}{\rVert_{\innprod j}}
\DeclarePairedDelimiter{\seminormjk}{\lVert}{\rVert_{\innprod{j(k)}}}
\newtheorem{assumption}{Assumption}
\newtheorem{definition}{Definition}
\newtheorem{lemma}{Lemma}
\newtheorem{proposition}{Proposition}
\newtheorem{theorem}{Theorem}
\newtheorem{corollary}{Corollary}
\newtheorem{remark}{Remark}
\newtheorem{example}{Example}
\newcommand{\email}[1]{E-mail: \href{mailto:#1}{\texttt{#1}}}
\begin{document}
\thispagestyle{plain}

\setcounter{page}{1}

{\centering
{\LARGE \bfseries Active-set identification with complexity guarantees of an almost cyclic 2-coordinate descent method with Armijo line search}

\bigskip\bigskip
Andrea Cristofari$^*$
\bigskip

}

\begin{center}
\small{\noindent$^*$Department of Mathematics ``Tullio Levi-Civita'' \\
University of Padua \\
Via Trieste, 63, 35121 Padua, Italy \\
\email{andrea.cristofari@unipd.it} \\
}
\end{center}

\bigskip\par\bigskip\par
\noindent \textbf{Abstract.}
In this paper, it is established finite active-set identification of an almost cyclic 2-coordinate descent method
for problems with one linear coupling constraint and simple bounds.
First, general active-set identification results are stated for non-convex objective functions.
Then, under convexity and a quadratic growth condition (satisfied by any strongly convex function),
complexity results on the number of iterations required to identify the active set are given.
In our analysis, a simple Armijo line search is used to compute the stepsize,
thus not requiring exact minimizations or additional information.

\bigskip\par
\noindent \textbf{Keywords.} Active-set identification. Surface identification. Manifold identification. Active-set complexity. Block coordinate descent methods.

\bigskip\par
\noindent \textbf{MSC2000 subject classifications.} 90C06. 90C30. 65K05.

\section{Introduction}\label{sec:intro}
In many different contexts, a desirable property of an optimization algorithm is the ability to identify,
in a finite number of iterations, a surface containing an optimal solution, in the sense that the points generated
by the algorithm eventually remain on that surface.
After such an identification, convergence can indeed be faster
since the algorithm can work in a lower dimensional space and, under proper assumptions,
it may also be possible to switch to methods with higher convergence rate.
Furthermore, in certain problems one may only be interested in knowing the structure of an optimal solution,
which can be revealed by identifying a surface where it lies,
without the need of running the algorithm to convergence
(for example, in lasso problems sparse solutions are promoted by the $\ell_1$ norm and
one may only be interested in knowing the support of an optimal solution).

In the literature, much effort has been devoted to proving identification properties of some algorithms for
smooth optimization~\cite{bertsekas:1976,bomze:2019a,bomze:2019b,burke:1990,burke:1988,burke:1994,calamai:1987,clarkson:2010,dunn:1987,gafni:1984,hare:2009,she:2017,wright:1993},
non-smooth optimization~\cite{daniilidis:2009,hare:2011,hare:2004,lewis:2011,liang:2017,mifflin:2002,nutini:2017,nutini:2019,sun:2019,wright:2012},
stochastic optimization~\cite{duchi:2016,lee:2012,poon:2018}
and derivative-free optimization~\cite{lewis:2010}.
Moreover, a wide class of methods, known as \textit{active-set methods}, has been object of extensive study from decades
(see, e.g.,~\cite{birgin:2002,cristofari:2017,cristofari:2017simplex,desantis:2016,facchinei:1998,hager:2006} and the references therein),
making use of specific techniques to identify the so called \textit{active set},
which is the set of constraints or variables that parametrizes a surface containing a solution.

The scope of the present paper is establishing finite active-set identification of a \mbox{2-coordinate} descent method,
proposed by the author in~\cite{cristofari:2019}, for smooth minimization problems with one linear equality constraint and simple bounds on the variables.
The main contributions of this paper can be summarized in the following points:
\begin{enumerate}[label=(\roman*), leftmargin=*]
\item The problem we consider here is not separable, due to a coupling constraint,
    and the method under analysis does not require first-order information to choose the working set,
    while guaranteeing deterministic convergence properties.

    These features represent major differences with the analysis of other block coordinate descent methods
    for which active-set identification results have been proved~\cite{cristofari:2019total,desantis:2016,hsieh:2008,luo:1992,luo:1993,nutini:2017,she:2017,wright:2012},
    since these methods either
    solve unconstrained problems where the objective function is the sum of a smooth term and a convex separable term
    (the latter might be an indicator function that enforces bound constraints),
    or allow for a non-separable structure but require full gradient evaluations to choose the working set,
    or have convergence results in expectation.
    In particular, active-set identification results are given in~\cite{she:2017} for variants of the sequential minimal optimization algorithm
    applied to the Support Vector Machine problem, where the authors consider a random selection of the working-set,
    which therefore does not require first-order information, but leads to convergence results in expectation.
\item Besides stating finite active-set identification results in a general non-convex setting,
    complexity results are also given under convexity of the objective function and a quadratic growth condition
    (satisfied by any strongly convex function), allowing us to bound the maximum number of iterations needed to identify the active set.
\end{enumerate}

Let us also remark that here we consider a simple Armijo line search for computing the stepsize along any search direction,
thus not requiring exact minimizations, or the knowledge of the Lipschitz constant of the gradient, or other additional information.
This makes our analysis of particular interest for realistic application to large-scale optimization problems.

\section{Preliminaries and Notation}\label{sec:preliminaries}
Let us first introduce part of the notation used in the paper.
Given a function $f \colon \R^n \to \R$, we indicate the gradient of $f$ by $\nabla f$
and we denote by $\nabla_i f$ its $i$th component (i.e., the $i$th partial derivative of $f$).
For a vector $x \in \R^n$, we denote by $x_i$ the $i$th component of $x$, we indicate by $\norm x$ the Euclidean norm of $x$
and we indicate by $\norm x_{\infty}$ the sup-norm of $x$.
We also denote by $e \in \R^n$ the vector made of all ones,
and by $e_i \in \R^n$ the vector that has the $i$th component equal to $1$ and all other components equal to $0$.
Given a scalar $a$, we indicate with $\lfloor a \rfloor$ the largest integer less than or equal to $a$.

Our analysis is concerned with the following problem:
\begin{equation}\label{prob}
\begin{split}
& \min \, f(x) \\
& e^T x = b \\
& l_i \le x_i \le u_i, \quad i = 1,\ldots,n,
\end{split}
\end{equation}
where $f \colon \R^n \to \R$ is a function with Lipschitz continuous gradient, $n \ge 2$, $b \in \R$
and, for all $i = 1,\ldots,n$, we have $l_i < u_i$, $l_i \in \R \cup \{-\infty\}$, $u_i \in \R \cup \{+\infty\}$.
The feasible set of problem~\eqref{prob} is denoted by $\F$.

Note that we may consider, instead of $e^T x = b$, any constraint of the form $a^T x = b$, with $a_i \ne 0$, $i = 1,\ldots,n$.
In such a case, problem~\eqref{prob} can be obtained by applying the variable transformation $x_i \leftarrow a_i \, x_i$ and setting the lower and the upper bound accordingly.
(Examples of relevant applications where problem~\eqref{prob} arises can be found, e.g., in~\cite{cristofari:2019} and the references therein.)


The Lipschitz constant of $\nabla f$ over $\R^n$ is denoted by $L$, that is,
\[
\norm{\nabla f(x) - \nabla f(y)} \le L \norm{x-y}, \quad \forall \, x,y \in \R^n.
\]
It is possible to show~\cite{beck:2014} that there exist local Lipschitz constants
\begin{equation}\label{local_lipschitz}
L_{i,j} \le 2 L, \quad i,j = 1,\ldots,n,
\end{equation}
such that, for any $x \in \R^n$,
\[
\abs{\nabla f(x + s(e_i-e_j))^T (e_i-e_j) - \nabla f(x + t(e_i-e_j))^T (e_i-e_j)} \le L_{i,j} \abs{s-t}, \quad \forall \, s,t \in \R.
\]
Equivalently, defining $\phi_{i,j,x}(\alpha) = f(x+\alpha(e_i-e_j))$ and denoting its derivative by $\dot \phi_{i,j,x}$,
we have that
\begin{equation}\label{local_lips_phi}
\abs{\dot \phi_{i,j,x}(s) - \dot \phi_{i,j,x}(t)} \le L_{i,j} \abs{s-t}, \quad \forall \, s,t \in \R,
\end{equation}
that is, each derivative $\dot \phi_{i,j,x}$ is Lipschitz continuous over $\R$ with constant $L_{i,j}$.

Without loss of generality, we assume that all $L_{i,j}>0$, $i \ne j$
(if some of them are equal to zero, they can be replaced by positive overestimates)
and that $L_{i,i} = 0$, $i = 1,\ldots,n$.
We also define the following constants:
\begin{align}
& L^{\text{max}} = \max_{i,j = 1,\ldots,n} L_{i,j}, \label{Lmax} \\
& L_j = \sum_{i=1}^n L_{i,j}, \quad j = 1,\ldots,n, \label{Lj} \\
& \Lmaxj = \max_{j=1,\ldots,n} L_j. \label{Lmaxj}
\end{align}

A characterization of stationary points for problem~\eqref{prob} follows from KKT conditions.
In particular, a point $x^* \in \F$ is stationary for problem~\eqref{prob} if and only if there exists $\lambda^* \in \R$ such that,
for all $i = 1,\ldots,n$,
\begin{equation}\label{stat}
\nabla_i f(x^*)
\begin{cases}
\ge \lambda^*, \quad & \text{if } x^*_i = l_i, \\
= \lambda^*,   \quad & \text{if } x^*_i \in (l_i,u_i), \\
\le \lambda^*, \quad & \text{if } x^*_i = u_i.
\end{cases}
\end{equation}
Moreover, a variable $x^*_i \in \{l_i, u_i\}$ is said to satisfy the strict complementarity if $\nabla_i f(x^*) \ne \lambda^*$.
We also say that $x^*$ is non-degenerate if all variables $x^*_i$ such that $x^*_i \in \{l_i, u_i\}$ satisfy the strict complementarity.

In the following, we will make use of a simple operator between vectors in $\R^n$,
obtained from the usual dot product by discarding a certain component.
More precisely, for any $j \in \{1,\ldots,n\}$ we define the following positive semidefinite inner product:
\[
\innprod{x,y}_j = \sum_{i \ne j} x_i y_i, \quad \forall \, x,y \in \R^n.
\]
We also define the following seminorm, induced by the above inner product:
\[
\seminormj{x} = \sqrt{\innprod{x,x}_j}, \quad \forall \, x \in \R^n.
\]
Note that, by Cauchy-Bunyakovsky-Schwarz inequality, we have
\begin{equation}\label{cauchy}
\innprod{x,y}_j \le \norm x_{\innprod j} \, \norm y_{\innprod j}, \quad \forall \, x,y \in \R^n.
\end{equation}
In particular,~\eqref{cauchy} implies that
\begin{align}
\abs{x_i} & \le \seminormj{x}, \quad i \ne j, \hspace*{-88pt} & \forall \, x \in \R^n, \label{abs_vs_seminorm} \\
\sum_{i \ne j} \abs{x_i} & \le \sqrt{n-1} \seminormj{x}, \hspace*{-88pt} & \forall \, x \in \R^n. \label{sum_abs_vs_seminorm}
\end{align}
Moreover, it is straightforward to verify that
\begin{equation}\label{seminorm_le_norm}
\seminormj{x} \le \norm{x}, \quad \forall \, x \in \R^n.
\end{equation}

\section{Review of the algorithm}
Let us briefly review the algorithm proposed in~\cite{cristofari:2019},
named Almost Cyclic 2-Coordinate Descent (AC2CD) method, to solve problem~\eqref{prob}.
The main feature of AC2CD is an \textit{almost cyclic} rule to choose the working set.
This rule iteratively selects two variables: one is picked in a cyclic fashion,
while the other one is chosen by considering the distance from the bounds
in some points produced by the algorithm and remains in the working set until all the other variables have been picked.
Note the difference from the so-called \textit{essentially cyclic} rule, where all blocks of variables must be selected at least once
within a certain number of steps.

More precisely, at the beginning of each outer iteration $k$ of AC2CD we have a feasible point $x^k$ and we select a variable index $j(k)$ such that
$x^k_{j(k)}$ is ``sufficiently far'' from its nearest bound.
Then, we set the point $z^{k,1} = x^k$ and start a cycle of inner iterations, which are denoted by $(k,1),\ldots,(k,n)$.
In each inner iteration $(k,i)$,
we choose a working set of two variables: one of them is selected in a cyclic fashion, while the other one remains the $j(k)$th variable.
So, we produce a feasible point $z^{k,i+1}$ from $z^{k,i}$ by moving only the two variables in the working set.
At the end of the last inner iteration we finally set $x^{k+1} = z^{k,n+1}$ and start a new outer iteration $k+1$.

Let us remark that our algorithm does not use first-order information to choose the working set. Moreover, as to be described later,
only two partial derivatives are required to move each pair of variables.
We can hence achieve high computational efficiency if partial derivative evaluation for the objective function is much cheaper than full gradient evaluation.
For instance, this is the case when $f$ is the sum of univariate functions
(such as in the problems considered in~\cite{necoara:2017} for large-scale network optimization).
Other interesting examples, including the Support Vector Machine problem and the Chebyshev center problems,
are those where the objective function is quadratic of the form $f(x) = x^T Q^T Q x - q^T x$, with $Q$ being a given $m \times n$ matrix and $q$ being a given vector.
In this case, a partial derivative of $f(x)$ can be computed with a cost $\mathcal O(m)$, while computing the whole gradient has a cost $\mathcal O(mn)$
(see~\cite{cristofari:2019} for details).

Now, let us explain in more detail how the index $j(k)$ is chosen at the beginning of an outer iteration $k$
and how the two variables in the working set are moved in the inner iterations $(k,1),\ldots,(k,n)$.

For what concerns the choice of $j(k)$, for any $x \in \F$ let us first define
\begin{equation}\label{Dh}
D_h(x) = \min\{x_h-l_h,u_h-x_h\}, \quad h = 1,\ldots,n.
\end{equation}
Namely, $D_h(x)$ returns the distance of $x_h$ from its nearest bound.
Moreover, for any point $x^k$ produced by the algorithm, we define $D^k$ as
the maximum distance between each component of $x^k$ and its nearest bound, that is,
\begin{equation}\label{Dk}
D^k = \max_{h=1,\ldots,n} D_h(x^k).
\end{equation}
Then, $j(k)$ can be chosen as any index satisfying
\begin{equation}\label{j(k)}
D_{j(k)} (x^k) \ge \tau D^k,
\end{equation}
where $\tau \in (0,1]$ is a fixed parameter.
In other words, the distance between $x^k_{j(k)}$ and its nearest bound must be sufficiently large compared to $D^k$.

For what concerns the variable update,
let us denote by $p^k_i$ the variable index that is selected in a cyclic manner at an inner iteration $(k,i)$
(note that the variables can be taken in any order).
So, $z^{k,i}_{p^k_i}$ and $z^{k,i}_{j(k)}$ are the two variables that can be moved from $z^{k,i}$.
To do this, we use the following search direction (which has at most two non-zero components and maintains feasibility for the equality constraint):
\begin{equation}\label{g_d_def}
d^{k,i} = g^{k,i} (e_{p^k_i} - e_{j(k)}), \quad \text{where} \quad g^{k,i} = \nabla_{j(k)} f(z^{k,i}) - \nabla_{p^k_i} f(z^{k,i}),
\end{equation}
and we set
\[
z^{k,i+1} = z^{k,i} + \alpha^{k,i} d^{k,i},
\]
where $\alpha^{k,i}$ is a suitably computed feasible stepsize.
Note that
\begin{equation}\label{ac2cd_prop_2}
\nabla f(z^{k,i})^T d^{k,i} = -(g^{k,i})^2,
\end{equation}
and then, every non-zero $d^{k,i}$ is a descent direction.
The scheme of AC2CD is reported in Algorithm~\ref{alg:ac2cd}.

\begin{algorithm}[h!]
\caption{\texttt{\hspace*{0.1truecm}Almost Cyclic 2-Coordinate Descent (AC2CD) method}}
\label{alg:ac2cd}
\begin{algorithmic}
\item[]\hspace*{-0.1truecm}$\,\,\,0$\hspace*{0.1truecm}
    \textbf{Given} $x^0 \in \F$ and $\tau \in (0,1]$
\item[]\hspace*{-0.1truecm}$\,\,\,1$\hspace*{0.1truecm} \textbf{For} $k = 0,1,\ldots$
\item[]\hspace*{-0.1truecm}$\,\,\,2$\hspace*{0.6truecm} Choose a variable index $j(k) \in \{1,\ldots,n\}$ that satisfies~\eqref{j(k)}
\item[]\hspace*{-0.1truecm}$\,\,\,3$\hspace*{0.6truecm} Choose a permutation $\{p^k_1,\ldots,p^k_n\}$ of $\{1,\ldots,n\}$
\item[]\hspace*{-0.1truecm}$\,\,\,4$\hspace*{0.6truecm} Set $z^{k,1} = x^k$
\item[]\hspace*{-0.1truecm}$\,\,\,5$\hspace*{0.6truecm} \textbf{For} $i = 1,\ldots,n$
\item[]\hspace*{-0.1truecm}$\,\,\,6$\hspace*{1.1truecm} Let $g^{k,i} = \nabla_{j(k)} f(z^{k,i}) - \nabla_{p^k_i} f(z^{k,i})$
\item[]\hspace*{-0.1truecm}$\,\,\,7$\hspace*{1.1truecm} Compute the search direction $d^{k,i} = g^{k,i} (e_{p^k_i} - e_{j(k)})$
\item[]\hspace*{-0.1truecm}$\,\,\,8$\hspace*{1.1truecm} Compute a feasible stepsize $\alpha^{k,i}$ and set $z^{k,i+1} = z^{k,i} + \alpha^{k,i} d^{k,i}$
\item[]\hspace*{-0.1truecm}$\,\,\,9$\hspace*{0.6truecm} \textbf{End for}
\item[]\hspace*{-0.1truecm}$10$\hspace*{0.6truecm} Set $x^{k+1} = z^{k,n+1}$
\item[]\hspace*{-0.1truecm}$11$\hspace*{0.1truecm} \textbf{End for}
\end{algorithmic}
\end{algorithm}

\subsection{Computation of the stepsize}
Under a technical assumption (see Assumption~\ref{assumpt:l0_int_point} in the next section),
global convergence of AC2CD to stationary points was established in~\cite{cristofari:2019}
for different choices of the stepsize $\alpha^{k,i}$ (to be used at line~$8$ of Algorithm~\ref{alg:ac2cd}),
including the Armijo stepsize, overestimates of the local Lipschitz constants of $\nabla f$ and the exact stepsize for strictly convex objective
functions\footnote{For general conditions on the stepsize,
see SC (Stepsize Condition)~1 in~\cite{cristofari:2019}.
A typo is present in point~(i) of SC~1 in~\cite{cristofari:2019}: $f(z^{k,i+i})$ should be replaced by $f(z^{k,i+1})$.}.

Here we focus on the case where, at every inner iteration $(k,i)$,
the stepsize $\alpha^{k,i}$ is computed by the Armijo line search,
which is a backtracking procedure that computes a stepsize in a finite number of iterations.
The scheme of the Armijo line search used in AC2CD is reported in Algorithm~\ref{alg:armijo}.

\begin{algorithm}[h!]
\caption{\texttt{\hspace*{0.1truecm}Armijo line search} (to compute $\alpha^{k,i}$ at step~8 of AC2CD)}
\label{alg:armijo}
\begin{algorithmic}
\item[]\hspace*{-0.1truecm}$\,\,\,0$\hspace*{0.1truecm}
    \textbf{Given} the search direction $d^{k,i}$ and two parameters $\gamma \in (0,1)$, $\delta \in (0,1)$
\item[]\hspace*{-0.1truecm}$\,\,\,1$\hspace*{0.1truecm} Choose a feasible stepsize $\Delta^{k,i} \ge 0$ and set $\alpha = \Delta^{k,i}$
\item[]\hspace*{-0.1truecm}$\,\,\,2$\hspace*{0.1truecm} \textbf{While} $f(z^{k,i} + \alpha d^{k,i}) > f (z^{k,i}) + \gamma \alpha \nabla f(z^{k,i})^Td^{k,i}$
\item[]\hspace*{-0.1truecm}$\,\,\,3$\hspace*{0.6truecm} Set $\alpha = \delta \alpha$
\item[]\hspace*{-0.1truecm}$\,\,\,4$\hspace*{0.1truecm} \textbf{End while}
\item[]\hspace*{-0.1truecm}$\,\,\,5$\hspace*{0.1truecm} Return $\alpha^{k,i} = \alpha$
\end{algorithmic}
\end{algorithm}

We see that the considered Armijo line search is very simple and does not require exact minimizations or
additional information (such as the knowledge of the Lipschitz constant of $\nabla f$).
For this reason, it can be an effective choice for non-convex large-scale problems and when no closed form is known for the stepsize.

To obtain global convergence of AC2CD to stationary points, an appropriate choice of the initial stepsize $\Delta^{k,i}$
at line~$1$ of Algorithm~\ref{alg:armijo} is needed.
In~\cite{cristofari:2019} it was shown that, at every inner iteration $(k,i)$, a possible choice is the following:
\begin{equation}\label{initial_stepsize_armijo_1}
\Delta^{k,i} = \min\{\bar \alpha^{k,i}, A^{k,i}\},
\end{equation}
where
\begin{itemize}
\item $\bar \alpha^{k,i}$ is the largest feasible stepsize along the direction $d^{k,i}$, that is,
    \begin{equation}\label{max_stepsize_def}
    \bar \alpha^{k,i} =
    \begin{cases}
    \dfrac1{g^{k,i}} \, \min \{u_{p^k_i}-z^{k,i}_{p^k_i},z^{k,i}_{j(k)}-l_{j(k)}\}, \quad       & \text{if } g^{k,i} > 0, \\[1.5ex]
    \dfrac1{\abs{g^{k,i}}} \, \min \{z^{k,i}_{p^k_i}-l_{p^k_i},u_{j(k)}-z^{k,i}_{j(k)}\}, \quad & \text{if } g^{k,i} < 0, \\[1.5ex]
    0,                                                                                     & \text{if } g^{k,i} = 0;
    \end{cases}
    \end{equation}
\item $A^{k,i}$ must be chosen between two finite positive constants, that is,
    \begin{equation}\label{initial_stepsize_armijo_2}
    0 < A_l \le A^{k,i} \le A_u < \infty,
    \end{equation}
    with $A_l$ and $A_u$ being two fixed parameters.
\end{itemize}
We observe that, in~\eqref{max_stepsize_def}, we set $\bar \alpha^{k,i}=0$
when $g^{k,i} = 0$, i.e., when $d^{k,i} = 0$ (see~\eqref{g_d_def}).
Therefore, $\bar \alpha^{k,i}$ is not actually the largest feasible stepsize along $d^{k,i}$ when $d^{k,i} = 0$.
This choice in the definition of $\bar \alpha^{k,i}$ simplifies the analysis and entails no loss of generality,
since it stills guarantees that $z^{k,i+1} = z^{k,i}$ when $d^{k,i} = 0$.
In particular, note that
\begin{equation}\label{ac2cd_prop_3}
d^{k,i} = 0 \quad \overset{\text{\eqref{g_d_def}}}\Leftrightarrow \quad g^{k,i}=0
\quad \overset{\text{\eqref{max_stepsize_def}}}\Rightarrow \quad \bar \alpha^{k,i} = 0
\quad \Leftrightarrow \quad z^{k,i+1} = z^{k,i}.
\end{equation}
To obtain the last relation in~\eqref{ac2cd_prop_3},
we can use~\eqref{ac2cd_prop_2}, \eqref{initial_stepsize_armijo_1} and~\eqref{initial_stepsize_armijo_2}, leading to
\[
\bar \alpha^{k,i} > 0 \quad \Leftrightarrow \quad \Delta^{k,i} > 0 \, \land \, \nabla f(z^{k,i})^T d^{k,i} < 0.
\]
So, if $\bar \alpha^{k,i} > 0$, the Armijo line search returns a stepsize $\alpha^{k,i} > 0$, implying that $z^{k,i+1} \ne z^{k,i}$.
Vice versa, if $\bar \alpha^{k,i} = 0$,
the Armijo line search returns $\alpha^{k,i} = 0$, implying that $z^{k,i+1} = z^{k,i}$.
Namely, the last relation in~\eqref{ac2cd_prop_3} holds.

\section{Basic assumptions}\label{sec:assumptions}
Let $X^*$ be the set of all stationary points for problem~\eqref{prob} and
also define the level set
\[
\LS = \{x \in \F \colon f(x) \le f(x^0)\},
\]
where $\F$ is the feasible set of problem~\eqref{prob} and $x^0$ is the starting point used in AC2CD.
We assume that $\LS$ is non-empty and compact
(implying that both the feasible set $\F$ and the set of stationary points $X^*$ are non-empty as well).

According to the results stated in~\cite{cristofari:2019},
we also need the following assumption on the level set $\LS$ to ensure global convergence of AC2CD
(in the sense that every limit point of the sequence $\{x^k\}$ produced by the algorithm is stationary):
\begin{assumption}\label{assumpt:l0_int_point}
$\forall \, x \in \LS, \, \exists \, i \in \{1,\ldots,n\} \colon x_i \in (l_i,u_i)$.
\end{assumption}
Namely, we require that every point of $\LS$ has at least one component strictly between the lower and the upper bound.
Note that Assumption~\ref{assumpt:l0_int_point} is automatically satisfied when $\F$ is the unit simplex
(i.e., when in problem~\eqref{prob} we have $b=1$, $l_i = 0$, $u_i = +\infty$, $i=1,\ldots,n$).
Moreover, in~\cite{lin:2009} it is shown that Assumption~\ref{assumpt:l0_int_point} is also satisfied for the Support Vector Machine training problem
if $f(x^0)<0$ and the smallest eigenvalue of the Hessian matrix of $f(x)$ is sufficiently large.
(Assumption~\ref{assumpt:l0_int_point} is satisfied also when at least one variable has are no finite bounds, provided $\F$ is not a singleton.)

Essentially, Assumption~\ref{assumpt:l0_int_point} is needed to prevent AC2CD from converging to a point $x^*$ with all components at the lower or the upper bound.
To be more specific, the convergence analysis of AC2CD (see~\cite{cristofari:2019}) relies on the fact that eventually $l_{j(k)} < x^k_{j(k)} < u_{j(k)}$
and that $\nabla_{j(k)} f(x^k)$ converges (over suitable subsequences) to the KKT multiplier $\lambda^*$ appearing in~\eqref{stat}.
Also the analysis of the active-set identification reported later uses the same properties
(see Proposition~\ref{prop:j_active_set} and the proof of Theorem~\ref{th:active_set_general_1} below).
Without Assumption~\ref{assumpt:l0_int_point}, all these results do not hold, since
$\{x^k\}$ may have limit points with all components at the lower or the upper bound.

We also observe that, for every outer iteration $k \ge 0$, Assumption~\ref{assumpt:l0_int_point} ensures that
$x^{k+1} \ne x^k$ if and only if $x^k$ is non-stationary.
To see this, under Assumption~\ref{assumpt:l0_int_point} observe that $l_{j(k)} < x^k_{j(k)} < u_{j(k)}$
for all $k \ge 0$ (since $j(k)$ must satisfy~\eqref{j(k)} with $D^k>0$).
Then, from the KKT conditions~\eqref{stat}, there exists a feasible descent direction in the inner iterations
$(k,1),\ldots,(k,n)$ if and only if $x^k$ is non-stationary.

On the contrary, without Assumption~\ref{assumpt:l0_int_point}, the algorithm may end up in a non-stationary point $x^k$ with all components at the lower or the upper bound.
In such a case, even if every choice of $j(k) = 1,\ldots,n$ satisfies~\eqref{j(k)} (since $D^k = D_h(x^k) = 0$, $h = 1,\ldots,n$),
for certain choices of $j(k)$ there may not exist a feasible descent direction in any inner iteration $(k,1),\ldots,(k,n)$.
Namely, AC2CD may get stuck in a non-stationary point $x^k$.
This issue can be overcome by introducing an anticycling rule to select $j(k)$ when such a point $x^k$ is produced.
Doing so, we may relax Assumption~\ref{assumpt:l0_int_point} by requiring only the stationary points in $\LS$
not to have all components at the lower or the upper bound, but in our analysis we use Assumption~\ref{assumpt:l0_int_point} for simplicity.

Overcoming the limitation deriving from Assumption~\ref{assumpt:l0_int_point} by properly modifying the algorithm
might be a challenging subject for future research.

In the rest of the paper, we will consider all the above assumptions
always satisfied, even if not explicitly invoked.
Namely, we will consider $\LS$ non-empty and compact and we will consider Assumption~\ref{assumpt:l0_int_point} satisfied.

\section{Technical results}\label{technical_results}
In this section, we fix a few concepts and give some technical results.
First note that, for every inner iteration $(k,i)$ of AC2CD,
\begin{equation}\label{ac2cd_prop_1}
p^k_i \ne j(k) \quad \Rightarrow \quad z^{k,i}_{p^k_i} = x^k_{p^k_i} \text{ and } x^{k+1}_{p^k_i} = z^{k,i+1}_{p^k_i},
\end{equation}
since each coordinate, except the $j(k)$th one, is moved (at most) once in a cycle of inner iterations.

Furthermore, there is a relation between the Armijo stepsize and the local Lipschitz constants of $\nabla f$:
at any inner iteration $(k,i)$, every stepsize $\alpha \le 2 (1-\gamma) / L_{p^k_i,j(k)}$ satisfies the so called Armijo condition,
which is the exit condition in the while loop of Algorithm~\ref{alg:armijo}.
Namely,
$f (z^{k,i} + \alpha d^{k,i}) \le f (z^{k,i}) + \gamma \alpha \nabla f(z^{k,i})^T d^{k,i}$
for all $\alpha \in [0, 2 (1-\gamma)/L_{p^k_i,j(k)}]$
(see the proof of Proposition~3 in~\cite{cristofari:2019}).
Since, in our line search, $\alpha$ is multiplied by $\delta \in (0,1)$ until the Armijo condition is satisfied
(see line~3 in Algorithm~\ref{alg:armijo}), we immediately have the following result.
\begin{lemma}\label{lemma:lower_bound_stepsize}
At every inner iteration $(k,i)$, the initial stepsize $\Delta^{k,i}$ used in the Armijo line search is such that
\begin{align*}
\Delta^{k,i} \le \frac{2 (1-\gamma)}{L_{p^k_i,j(k)}} \; & \Rightarrow \; \alpha^{k,i} = \Delta^{k,i}, \\
\Delta^{k,i} > \frac{2 (1-\gamma)}{L_{p^k_i,j(k)}} \; & \Rightarrow \; \alpha^{k,i} \in \biggl(\frac{2 \delta (1-\gamma)}{L_{p^k_i,j(k)}},\Delta^{k,i}\biggr],
\end{align*}
where, in the Armijo line search, $\gamma \in (0,1)$ is the parameter for sufficient decrease and
$\delta \in (0,1)$ is the reduction parameter.
Therefore, $\displaystyle{\alpha^{k,i} \ge \min\biggl\{\Delta^{k,i}, \frac{2 \delta (1-\gamma)}{L_{p^k_i,j(k)}}\biggr\}}$.
\end{lemma}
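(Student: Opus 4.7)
The plan is to exploit the one-dimensional Lipschitz property~\eqref{local_lips_phi} with indices $p^k_i$ and $j(k)$, centered at $x = z^{k,i}$, together with the descent identity~\eqref{ac2cd_prop_2}, to recover the standard sufficient-decrease estimate
\[
f(z^{k,i} + \alpha d^{k,i}) \le f(z^{k,i}) + \gamma \alpha \nabla f(z^{k,i})^T d^{k,i} \quad \forall \, \alpha \in \bigl[0, 2(1-\gamma)/L_{p^k_i,j(k)}\bigr].
\]
This is exactly the inequality already established in Proposition~3 of~\cite{cristofari:2019} and recalled just above the lemma statement, so I would simply quote it rather than rederive it from scratch. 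Call this fact the \emph{sufficient condition}; with it in hand, the lemma reduces to a clean bookkeeping argument on the while loop in Algorithm~\ref{alg:armijo}.

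I would then split on the size of the initial trial stepsize. In the first case, $\Delta^{k,i} \le 2(1-\gamma)/L_{p^k_i,j(k)}$: by the sufficient condition, the initial value $\alpha = \Delta^{k,i}$ already satisfies the Armijo test, the body of the while loop is never executed, and the procedure returns $\alpha^{k,i} = \Delta^{k,i}$, exactly the first claim.

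In the second case, $\Delta^{k,i} > 2(1-\gamma)/L_{p^k_i,j(k)}$, either the initial trial is accepted, giving $\alpha^{k,i} = \Delta^{k,i}$ (the upper endpoint of the claimed interval), or there is some smallest $m \ge 1$ such that $\alpha^{k,i} = \delta^m \Delta^{k,i}$ is accepted. In the latter subcase, the preceding trial $\delta^{m-1}\Delta^{k,i}$ failed the Armijo test; applying the contrapositive of the sufficient condition to that trial yields $\delta^{m-1}\Delta^{k,i} > 2(1-\gamma)/L_{p^k_i,j(k)}$, and multiplying by $\delta$ gives $\alpha^{k,i} > 2\delta(1-\gamma)/L_{p^k_i,j(k)}$. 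The upper bound $\alpha^{k,i} \le \Delta^{k,i}$ is immediate from the monotone shrinking of trial steps in the backtracking. Combining the two cases produces the final bound $\alpha^{k,i} \ge \min\{\Delta^{k,i}, 2\delta(1-\gamma)/L_{p^k_i,j(k)}\}$.

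There is no real obstacle here: the only substantive ingredient is the sufficient condition, which is cited as a known result, so the remainder is a short case analysis on whether backtracking is triggered and, if so, on the identity of the last rejected trial step.
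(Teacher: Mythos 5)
Your proof is correct and follows exactly the route the paper takes: the paper cites the same sufficient-decrease fact (every $\alpha \le 2(1-\gamma)/L_{p^k_i,j(k)}$ passes the Armijo test, from Proposition~3 of~\cite{cristofari:2019}) and then treats the lemma as an immediate consequence of the backtracking mechanics, which is precisely the case analysis you spell out. Your version is just a more explicit write-up of the argument the paper leaves implicit.
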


As a consequence of Lemma~1 in~\cite{cristofari:2019}, we also have the following relation between the limit of $\{x^k\}$
and the limit of the sequences $\{z^{k,i}\}$, $i = 1,\ldots,n$:
\begin{equation}\label{conv_z}
\lim_{k \to \infty} x^k = x^* \; \Leftrightarrow \; \lim_{k \to \infty} z^{k,i} = x^*, \quad i=1,\ldots,n.
\end{equation}

Now we state some useful properties derived from the semidefinite inner product and the seminorm defined at the end of Section~\ref{sec:preliminaries}.
In the following results, we use $L_j$ as defined in~\eqref{Lj}.
The proofs are reported in Appendix~\ref{app:technical results}.
\begin{lemma}\label{lemma:prod}
For any $j \in \{1,\ldots,n\}$ we have that
\[
v^T (x'-x'') = \innprod{v-v_j e, x'-x''}_j, \quad \forall \, x',x'' \in \F, \quad \forall \, v \in \R^n.
\]
\end{lemma}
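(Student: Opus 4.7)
The plan is to expand the right-hand side using the definition of the semidefinite inner product $\innprod{\cdot,\cdot}_j$ from Section~\ref{sec:preliminaries} and then invoke feasibility of $x'$ and $x''$ (namely $e^T x' = e^T x'' = b$) to collapse the resulting expression to the ordinary dot product.

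First I would write
\[
\innprod{v - v_j e, x' - x''}_j = \sum_{i \ne j}(v_i - v_j)(x'_i - x''_i),
\]
and split this into $\sum_{i \ne j} v_i (x'_i - x''_i)$ and $-v_j \sum_{i \ne j}(x'_i - x''_i)$. For the first piece I would add and subtract the missing $i=j$ term to recover the full dot product, obtaining $v^T(x'-x'') - v_j(x'_j - x''_j)$. For the second piece I would use $e^T(x'-x'') = 0$, which follows because $x', x'' \in \F$, to rewrite $\sum_{i \ne j}(x'_i - x''_i) = -(x'_j - x''_j)$, so that the second piece equals $v_j(x'_j - x''_j)$.

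Combining the two pieces, the spurious $v_j(x'_j - x''_j)$ contributions cancel and what remains is exactly $v^T(x'-x'')$, which is the desired identity. There is no real obstacle here: the only thing to be careful about is accounting correctly for the missing $j$th index when passing between the standard dot product and the seminorm-style sum, and making sure the linear equality constraint is used at the single step where it is needed.
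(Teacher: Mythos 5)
Your proof is correct and is essentially the same computation as the paper's, just run in the opposite direction: the paper starts from $v^T(x'-x'')$ and substitutes $x_j = b - \sum_{i\ne j} x_i$, while you expand the inner product and use $e^T(x'-x'')=0$, which is the identical use of feasibility. No issues.
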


\begin{lemma}\label{lemma:lips_const}
If $f$ is convex over $\R^n$, for any $j \in \{1,\ldots,n\}$ we have that
\[
\bigl\|[\nabla f(x') - \nabla_j f(x') e] - [\nabla f(x'') - \nabla_j f(x'') e]\bigr\|_{\innprod j}
\le L_j \seminormj{x'-x''}, \quad \forall \, x',x'' \in \F
\]
\end{lemma}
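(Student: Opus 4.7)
The plan is to telescope $\|h(x')-h(x'')\|_{\innprod j}$, where I write $h(x):=\nabla f(x)-\nabla_j f(x)e$ (so $h_k(x)=\nabla f(x)^T(e_k-e_j)$ for $k\ne j$ and $h_j(x)=0$), along a piecewise linear path from $x''$ to $x'$ whose segments each lie along a coordinate direction $e_i-e_j$ with $i\ne j$. Since $x',x''\in\F$, we have $e^T(x'-x'')=0$, so setting $\Delta_i:=x'_i-x''_i$ gives $\sum_{i=1}^n\Delta_i=0$. Enumerating $\{1,\ldots,n\}\setminus\{j\}$ as $i_1,\ldots,i_{n-1}$ in any order and defining $y^0:=x''$, $y^\ell:=y^{\ell-1}+\Delta_{i_\ell}(e_{i_\ell}-e_j)$, a direct check using $\Delta_j=-\sum_{i\ne j}\Delta_i$ shows that $y^{n-1}=x'$. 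The triangle inequality for $\|\cdot\|_{\innprod j}$ then yields
\[
\|h(x')-h(x'')\|_{\innprod j} \le \sum_{\ell=1}^{n-1}\|h(y^\ell)-h(y^{\ell-1})\|_{\innprod j}.
\]

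Next I would bound a single term. Assuming temporarily that $f\in C^2$ (the general case is recovered by mollifying $f$ with a smooth nonnegative kernel and letting the mollification parameter tend to zero, since both convexity and~\eqref{local_lips_phi} pass to the mollification), for each $k\ne j$ the fundamental theorem of calculus gives
\[
h_k(y^\ell)-h_k(y^{\ell-1})=\Delta_{i_\ell}\int_0^1(e_k-e_j)^T\nabla^2 f\bigl(y^{\ell-1}+t\Delta_{i_\ell}(e_{i_\ell}-e_j)\bigr)(e_{i_\ell}-e_j)\,dt.
\]
Convexity of $f$ gives $\nabla^2 f\succeq 0$, so the Cauchy-Schwarz inequality for the bilinear form $(u,v)\mapsto u^T\nabla^2 f\,v$ yields $|(e_k-e_j)^T\nabla^2 f(e_{i_\ell}-e_j)|\le\sqrt{(e_k-e_j)^T\nabla^2 f(e_k-e_j)}\sqrt{(e_{i_\ell}-e_j)^T\nabla^2 f(e_{i_\ell}-e_j)}$, and each directional second derivative is bounded by the corresponding Lipschitz constant from~\eqref{local_lips_phi} (namely $L_{k,j}$ and $L_{i_\ell,j}$). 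Thus $|h_k(y^\ell)-h_k(y^{\ell-1})|\le|\Delta_{i_\ell}|\sqrt{L_{k,j}L_{i_\ell,j}}$; squaring, summing over $k\ne j$, and using $L_{j,j}=0$ and~\eqref{Lj} gives $\|h(y^\ell)-h(y^{\ell-1})\|_{\innprod j}\le|\Delta_{i_\ell}|\sqrt{L_{i_\ell,j}L_j}$.

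Inserting this bound and applying Cauchy-Schwarz once more to the resulting sum over $\ell$,
\[
\|h(x')-h(x'')\|_{\innprod j}\le\sqrt{L_j}\sum_{i\ne j}|\Delta_i|\sqrt{L_{i,j}}\le\sqrt{L_j}\sqrt{\sum_{i\ne j}\Delta_i^2}\sqrt{\sum_{i\ne j}L_{i,j}}=L_j\seminormj{x'-x''},
\]
which is the claimed inequality. The main obstacle is the PSD Cauchy-Schwarz step: this is precisely what allows the ``off-diagonal'' second-order behaviour of $f$ (along $e_k-e_j$ while moving along $e_{i_\ell}-e_j$) to be controlled by the \emph{geometric} mean of the two corresponding directional Lipschitz constants, and it genuinely requires the convexity hypothesis; without convexity one could at best obtain a weaker bound involving the full Lipschitz constant $L$ of $\nabla f$ rather than the per-direction $L_{i,j}$.
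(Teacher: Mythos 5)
Your proof is correct, but it follows a genuinely different route from the paper's. The paper never touches second derivatives: it first proves a coordinate-wise descent lemma along the directions $e_h-e_j$, takes a specific step of length $(\nabla_j f(x)-\nabla_h f(x))/L_{h,j}$ to deduce $f(x)-\inf f\ge \tfrac1{2L_j}\bigl\|\nabla f(x)-\nabla_j f(x)e\bigr\|_{\innprod j}^2$, and then applies this bound to the two Bregman-type functions $\psi_1(x)=f(x)-f(x')-\nabla f(x')^T(x-x')$ and $\psi_2$ (which are nonnegative with minimum value $0$ precisely because $f$ is convex, and inherit the same local Lipschitz constants). Summing the two resulting inequalities yields the cocoercivity bound $\|h(x')-h(x'')\|_{\innprod j}^2\le L_j\,[\nabla f(x')-\nabla f(x'')]^T(x'-x'')$, after which Lemma~\ref{lemma:prod} and~\eqref{cauchy} finish the job. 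Your argument instead telescopes along a coordinate path (valid thanks to $e^T(x'-x'')=0$), controls each segment via the Hessian and the Cauchy--Schwarz inequality for the positive semidefinite form $u^T\nabla^2 f\,v$, and closes with a second Cauchy--Schwarz over the segments; convexity enters through $\nabla^2 f\succeq 0$ rather than through the nonnegativity of the Bregman divergence. Both routes land on the same constant $L_j$. What the paper's argument buys is that it works directly in the $C^1$ setting, whereas yours needs the mollification detour to make sense of $\nabla^2 f$; that detour is legitimate (convolution with a nonnegative kernel preserves convexity, preserves each constant $L_{i,j}$ in~\eqref{local_lips_phi}, and $\nabla f_\epsilon\to\nabla f$ locally uniformly, so the inequality survives the limit), but it is an extra technical layer you should spell out if you want the proof to be airtight. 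What your argument buys is a more transparent accounting of where each $L_{i,j}$ enters: the geometric mean $\sqrt{L_{k,j}L_{i_\ell,j}}$ makes explicit how the off-diagonal curvature is absorbed, and it makes clear that without convexity one would be forced back to the global constant $L$.
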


\begin{corollary}\label{corol:lips}
If $f$ is convex over $\R^n$, at every inner iteration $(k,i)$ of AC2CD we have that
\[
\abs{\nabla_{p^k_i} f(v)-\nabla_{j(k)} f(v)+g^{k,i}} \le L_{j(k)} \seminormjk{v-z^{k,i}}, \quad \forall \, v \in \R^n.
\]
\end{corollary}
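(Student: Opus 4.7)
The plan is that this corollary should follow essentially for free from Lemma~\ref{lemma:lips_const} combined with the elementary inequality~\eqref{abs_vs_seminorm}, after a purely algebraic rewriting of the quantity on the left-hand side. No new analytic content is needed; the substantive Lipschitz-type estimate has already been packaged into Lemma~\ref{lemma:lips_const}.

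First, I would substitute the definition $g^{k,i} = \nabla_{j(k)} f(z^{k,i}) - \nabla_{p^k_i} f(z^{k,i})$ from~\eqref{g_d_def} into the expression inside the absolute value, and regroup the four terms as
\[
\nabla_{p^k_i} f(v) - \nabla_{j(k)} f(v) + g^{k,i}
= \bigl[\nabla_{p^k_i} f(v) - \nabla_{j(k)} f(v)\bigr]
- \bigl[\nabla_{p^k_i} f(z^{k,i}) - \nabla_{j(k)} f(z^{k,i})\bigr].
\]
Setting $w = [\nabla f(v) - \nabla_{j(k)} f(v) e] - [\nabla f(z^{k,i}) - \nabla_{j(k)} f(z^{k,i}) e]$, the right-hand side above is precisely $w_{p^k_i}$, because the $h$th coordinate of the vector $\nabla f(\cdot) - \nabla_{j(k)} f(\cdot) e$ equals $\nabla_h f(\cdot) - \nabla_{j(k)} f(\cdot)$.

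Next, I would dispose of the degenerate case $p^k_i = j(k)$: in that case $g^{k,i} = 0$ by definition, the left-hand side vanishes, and the inequality is trivial. Assuming therefore $p^k_i \ne j(k)$, inequality~\eqref{abs_vs_seminorm} applied to the vector $w$ with index $j = j(k)$ yields $|w_{p^k_i}| \le \|w\|_{\innprod{j(k)}}$. Finally, Lemma~\ref{lemma:lips_const} applied with $j = j(k)$, $x' = v$ and $x'' = z^{k,i}$ gives
\[
\|w\|_{\innprod{j(k)}} \le L_{j(k)} \seminormjk{v - z^{k,i}},
\]
and chaining the two bounds concludes the proof.

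There is no real obstacle here; the only mildly delicate point is recognizing that the left-hand side can be rewritten as a single coordinate of the seminormed vector appearing in Lemma~\ref{lemma:lips_const}, and remembering to exclude the trivial coincidence $p^k_i = j(k)$ (for which the $p^k_i$th coordinate is, by definition, not even controlled by $\|\cdot\|_{\innprod{j(k)}}$).
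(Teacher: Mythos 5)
Your proof is correct and follows essentially the same route as the paper: rewrite the left-hand side as the $p^k_i$th coordinate of $[\nabla f(v)-\nabla_{j(k)}f(v)e]-[\nabla f(z^{k,i})-\nabla_{j(k)}f(z^{k,i})e]$, bound it by the seminorm via~\eqref{abs_vs_seminorm}, and conclude with Lemma~\ref{lemma:lips_const}. Your explicit treatment of the case $p^k_i=j(k)$ is a small tidiness improvement over the paper's one-line argument, which implicitly relies on the left-hand side vanishing there.
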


\begin{lemma}\label{lemma:lips_descent}
If $f$ is convex over $\R^n$, for any $j \in \{1,\ldots,n\}$ we have that
\[
f(x'') \le f(x') + \nabla f(x')^T (x''-x') + \frac {L_j}2 \seminormj{x'-x''}^2, \quad \forall \, x',x'' \in \F.
\]
\end{lemma}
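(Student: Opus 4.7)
The plan is to emulate the classical proof of the descent lemma, but replace the usual Cauchy--Schwarz/Lipschitz step on the full gradient by its seminorm analogue, which is exactly what Lemmas~\ref{lemma:prod} and~\ref{lemma:lips_const} were set up to provide. Convexity of $f$ is needed only because Lemma~\ref{lemma:lips_const} assumes it; the rest of the argument is mechanical.

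First I would write
\[
f(x'') - f(x') - \nabla f(x')^T(x''-x') = \int_0^1 \bigl[\nabla f(x_t) - \nabla f(x')\bigr]^T (x''-x')\,dt,
\]
where $x_t := x' + t(x''-x')$. Since $\F$ is a polyhedron (hence convex) and $x',x''\in\F$, we have $x_t \in \F$ for every $t\in[0,1]$, so both arguments of the forthcoming applications of Lemma~\ref{lemma:prod} and Lemma~\ref{lemma:lips_const} are admissible.

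Next, I would apply Lemma~\ref{lemma:prod} with $v = \nabla f(x_t) - \nabla f(x')$, rewriting the inner integrand as
\[
\bigl\langle [\nabla f(x_t) - \nabla_j f(x_t)\,e] - [\nabla f(x') - \nabla_j f(x')\,e],\, x''-x' \bigr\rangle_j.
\]
Then the Cauchy--Bunyakovsky--Schwarz inequality~\eqref{cauchy} bounds this by the product of the two associated seminorms, and Lemma~\ref{lemma:lips_const} controls the first factor by $L_j\,\|x_t - x'\|_{\langle j\rangle}$. Because $x_t - x' = t(x''-x')$, this factor equals $t\,L_j\,\|x''-x'\|_{\langle j\rangle}$.

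Assembling the pieces gives
\[
\bigl[\nabla f(x_t) - \nabla f(x')\bigr]^T (x''-x') \le t\,L_j\,\seminormj{x''-x'}^2,
\]
and integrating over $t\in[0,1]$ yields the claim, since $\int_0^1 t\,dt = 1/2$. The main ``obstacle'' is really only bookkeeping: one has to notice that Lemma~\ref{lemma:prod} applies to any pair of feasible points (so, in particular, to $x_t$ and $x'$, not merely to $x'$ and $x''$), which is what allows the standard integration-along-the-segment argument to go through inside the seminorm framework.
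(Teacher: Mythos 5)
Your proof is correct and follows essentially the same route as the paper's: integration along the segment, conversion to the $\innprod{\cdot,\cdot}_j$ inner product via Lemma~\ref{lemma:prod}, the Cauchy--Bunyakovsky--Schwarz bound~\eqref{cauchy}, and Lemma~\ref{lemma:lips_const} applied to the feasible pair $x_t,x'$. The only cosmetic difference is that you apply Lemma~\ref{lemma:prod} once to $v=\nabla f(x_t)-\nabla f(x')$, whereas the paper applies it to $v=\nabla f(x_t)$ and splits off the $\nabla f(x')$ term at the end; the two bookkeepings are equivalent.
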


\section{Active-set identification in the non-convex case}\label{sec:active_set_general}
In this section, we show that AC2CD identifies the active set of problem~\eqref{prob} in a finite number of iterations,
without any assumption on the convexity of $f$.

First of all, let us give the definition of active set for our problem.
\begin{definition}
Given a stationary point $x^*$ of problem~\eqref{prob}, we define the \textit{active set} as
\[
\Astar(x^*) = \{i \colon x^*_i = l_i\} \cup  \{i \colon x^*_i = u_i\}.
\]
We also define
\[
\Astar^+(x^*) = \Astar(x^*) \cap \{i \colon \nabla_i f(x^*) \ne \lambda^*\},
\]
where $\lambda^*$ is the KKT multiplier associated with $x^*$ appearing in~\eqref{stat}.
\end{definition}

We see that $\Astar(x^*)$ is the set of indices of all the variables that are at the lower or the upper bound in a stationary point $x^*$,
whereas $\Astar^+(x^*)$ contains only the indices of the variables satisfying the strict complementarity.
We notice that, from a geometric perspective,
$\Astar^+(x^*)$ defines the face of $\F$ exposed to $-\nabla f(x^*)$~\cite{burke:1994}.

The scope of this section is two-fold:
\begin{enumerate}[label=(\roman*), leftmargin=*]
\item Firstly, it will be shown that, given a sequence of points $\{x^k\} \to x^*$ produced by AC2CD, an iteration $\kA$ exists such that, for all $k > \kA$,
    \begin{equation}\label{active_set_identification}
    x^k_h = x^*_h, \quad \forall \, h \in \Astar^+(x^*).
    \end{equation}
    Namely, in a finite number of iterations AC2CD sets to the bounds all the variables that satisfy the strict complementarity at $x^*$.
\item Secondly, we will give a characterization of the neighborhood of $x^*$ where~\eqref{active_set_identification} holds,
    which will be used in Section~\ref{sec:active_set_complexity} to obtain an upper bound for $\kA$
    (under convexity of $f$ and a quadratic growth condition).
\end{enumerate}

Note that, as common when analyzing active-set identification properties of an optimization algorithm,
here we require the whole sequence $\{x^k\}$ to converge.
For AC2CD, in~\cite{cristofari:2019} it was shown that every limit point of $\{x^k\}$ is stationary and,
if $\{f(x^k)\}$ converges, then $\lim_{k \to \infty} \norm{z^{k,i+1}-z^{k,i}} = 0$, $i=1,\ldots,n$,
implying that $\lim_{k \to \infty} \norm{x^{k+1}-x^k} = 0$ if a limit point of $\{x^k\}$ exists.
So, using the same arguments given in~\cite[Theorem~14.1.5]{ortega:1970},
we get that the whole sequence $\{x^k\}$ converges if the number of stationary points in $\LS$ is finite.
By a more general result stated in~\cite[Proposition~8.3.10]{facchinei:2003}, we also have that the whole sequence $\{x^k\}$
converges if it has an isolated limit point.
Other conditions can be obtained from~\cite[Theorem~4.3]{bomze:2019a}:
if $f$ satisfies a suitable descent property along the search directions,
then a strict local minimum with no other stationary points in its neighborhood attracts the whole sequence $\{x^k\}$.

Now, we start our analysis by giving an intermediate result stating that,
in a neighborhood of $x^*$, the index $j(k)$ is such that $l_{j(k)} < x^*_{j(k)} < u_{j(k)}$.

\begin{proposition}\label{prop:j_active_set}
Let $\{x^k\}$ be a sequence of points produced by AC2CD and assume that $\lim_{k \to \infty} x^k = x^*$.
Define the maximum distance from the bounds at $x^*$ as
\[
\Dmaxstar = \max_{i = 1,\ldots,n} D_i(x^*),
\]
which is positive by Assumption~\ref{assumpt:l0_int_point}, and let $\kj$ be the first outer iteration such that
\[
\norm{x^k-x^*}_{\infty} < \neighj, \quad \forall \, k \ge \kj,
\]
where $\tau \in (0,1]$ is the parameter used to choose $j(k)$, satisfying~\eqref{j(k)}.
Then, for all $k \ge \kj$ we have that $j(k) \notin \Astar(x^*)$.
\end{proposition}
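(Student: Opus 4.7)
The goal is to show that the index $j(k)$ selected by AC2CD eventually lies outside $\Astar(x^*)$, i.e., $D_{j(k)}(x^*) > 0$. The key observation I would start from is that the distance-to-bound function $D_h$ defined in~\eqref{Dh} is $1$-Lipschitz with respect to the sup-norm: since $D_h(x) = \min\{x_h - l_h, u_h - x_h\}$ and the minimum of two $1$-Lipschitz functions of $x_h$ is $1$-Lipschitz, we have $|D_h(x) - D_h(y)| \le |x_h - y_h| \le \|x - y\|_\infty$ for every $h$.

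Next I would pick an index $\bar i$ achieving the maximum on the right-hand side of $\Dmaxstar$, so that $D_{\bar i}(x^*) = \Dmaxstar$ (this is where Assumption~\ref{assumpt:l0_int_point} is used to guarantee $\Dmaxstar > 0$). Combining this with the Lipschitz estimate yields
\[
D^k \;\ge\; D_{\bar i}(x^k) \;\ge\; D_{\bar i}(x^*) - \|x^k-x^*\|_\infty \;=\; \Dmaxstar - \|x^k-x^*\|_\infty.
\]
Invoking the selection rule~\eqref{j(k)} and the Lipschitz estimate once more, now at index $j(k)$, gives
\[
D_{j(k)}(x^*) \;\ge\; D_{j(k)}(x^k) - \|x^k-x^*\|_\infty \;\ge\; \tau D^k - \|x^k-x^*\|_\infty \;\ge\; \tau \Dmaxstar - (\tau+1)\|x^k-x^*\|_\infty.
\]

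Finally, by the definition of $\kj$, for every $k \ge \kj$ we have $\|x^k - x^*\|_\infty < \frac{\tau}{\tau+1}\Dmaxstar$, so $(\tau+1)\|x^k - x^*\|_\infty < \tau \Dmaxstar$, which makes the last quantity strictly positive. Hence $D_{j(k)}(x^*) > 0$, which is equivalent to $l_{j(k)} < x^*_{j(k)} < u_{j(k)}$, i.e., $j(k) \notin \Astar(x^*)$.

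I do not foresee a real obstacle: the argument is a two-step Lipschitz-continuity chase on $D_h$, and the constant $\tfrac{\tau}{\tau+1}\Dmaxstar$ in the statement is exactly engineered so that both applications of the Lipschitz estimate (one at $\bar i$ and one at $j(k)$) can be absorbed simultaneously. The only subtlety worth stating explicitly in the write-up is that $\Dmaxstar > 0$ requires Assumption~\ref{assumpt:l0_int_point}, since otherwise $x^*$ might have all coordinates at their bounds and the inequality $\|x^k-x^*\|_\infty < \frac{\tau}{\tau+1}\Dmaxstar$ would be vacuous.
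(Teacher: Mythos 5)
Your proof is correct and follows essentially the same route as the paper's: both rest on applying the $1$-Lipschitz property of $D_h$ in the sup-norm once at an index attaining $\Dmaxstar$ and once at $j(k)$, and then invoking the selection rule~\eqref{j(k)}, with the threshold $\tfrac{\tau}{\tau+1}\Dmaxstar$ absorbing both error terms. The only difference is presentational: the paper argues by contradiction (assuming $x^*_{j(k)}$ is at a bound, so that $D_{j(k)}(x^k)\le\norm{x^k-x^*}_{\infty}$, and contradicting~\eqref{j(k)}), whereas you conclude $D_{j(k)}(x^*)>0$ directly; the underlying estimates are identical.
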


\begin{proof}
Consider an outer iteration $k \ge \kj$ and let $\hat \jmath$ be an index such that
$D_{\hat \jmath}(x^*) = \Dmaxstar$. We have
$\abs{x^k_{\hat \jmath} - x^*_{\hat \jmath}} \le \norm{x^k-x^*}_{\infty} < \neighj$,
implying that
\begin{equation}\label{Dh_proof_2}
x^k_{\hat \jmath} - l_{\hat \jmath} > x^*_{\hat \jmath} - l_{\hat \jmath} - \frac{\tau}{\tau+1} D_{\hat \jmath}(x^*) \quad \text{and} \quad
u_{\hat \jmath} - x^k_{\hat \jmath} > u_{\hat \jmath} - x^*_{\hat \jmath} - \frac{\tau}{\tau+1} D_{\hat \jmath}(x^*).
\end{equation}
Therefore, we can write
\begin{equation}\label{Dh_proof_3}
\begin{split}
D_{\hat \jmath}(x^k) & = \min\{x^k_{\hat \jmath}-l_{\hat \jmath},u_{\hat \jmath}-x^k_{\hat \jmath}\}
\overset{\text{\eqref{Dh_proof_2}}} > \min\{x^*_{\hat \jmath}-l_{\hat \jmath},u_{\hat \jmath}-x^*_{\hat \jmath}\} - \frac{\tau}{\tau+1} D_{\hat \jmath}(x^*) \\
& = D_{\hat \jmath}(x^*) - \frac{\tau}{\tau+1} D_{\hat \jmath}(x^*) = \frac 1{\tau+1} D_{\hat \jmath}(x^*).
\end{split}
\end{equation}
Arguing by contradiction, assume now that $j(k) \in \Astar(x^*)$, that is,
\begin{equation}\label{j_contr_proof}
x^*_{j(k)} \in \{l_{j(k)}, u_{j(k)}\}.
\end{equation}
We obtain
\begin{equation*}
\begin{split}
D_{j(k)} (x^k) & = \min\{x^k_{j(k)}-l_{j(k)},u_{j(k)}-x^k_{j(k)}\}
                 \overset{\text{\eqref{j_contr_proof}}} \le \abs{x^k_{j(k)}-x^*_{j(k)}} \le \norm{x^k - x^*}_{\infty} \\
               & < \frac{\tau}{\tau+1} \Dmaxstar =
                 \frac{\tau}{\tau+1} D_{\hat \jmath}(x^*) \overset{\text{\eqref{Dh_proof_3}}} < \tau D_{\hat \jmath}(x^k)
                 \overset{\text{\eqref{Dk}}} \le \tau D^k,
\end{split}
\end{equation*}
contradicting~\eqref{j(k)}.
\end{proof}

Combining the above proposition with~\eqref{conv_z}, the next result immediately follows.
\begin{proposition}\label{prop:z_j}
Let $\{x^k\}$ be a sequence of points produced by AC2CD and assume that $\lim_{k \to \infty} x^k = x^*$.
There exists an iteration $\kz$ such that, for all $k \ge \kz$,
\[
l_{j(k)} < z^{k,i}_{j(k)} < u_{j(k)}, \quad i = 1,\ldots,n+1.
\]
\end{proposition}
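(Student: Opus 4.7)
The plan is to combine Proposition~\ref{prop:j_active_set} with the limit relation~\eqref{conv_z} via a simple triangle inequality. First I would observe that Proposition~\ref{prop:j_active_set} guarantees $j(k) \notin \Astar(x^*)$ for all $k \ge \kj$, which means $x^*_{j(k)} \in (l_{j(k)}, u_{j(k)})$ and $D_{j(k)}(x^*) > 0$ for every such $k$. Since $j(k)$ can only range over the finite set $\{1,\ldots,n\} \setminus \Astar(x^*)$, the quantity
\[
\zeta^* := \min_{j \notin \Astar(x^*)} D_j(x^*)
\]
is strictly positive. This gives a single uniform lower bound on the distance from $x^*_{j(k)}$ to its nearest bound, valid for all sufficiently large $k$.

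Next I would apply the convergence of the inner iterates. By~\eqref{conv_z}, $z^{k,i} \to x^*$ for every $i \in \{1,\ldots,n\}$, and since $z^{k,n+1} = x^{k+1}$ by construction (line~10 of Algorithm~\ref{alg:ac2cd}), the sequence $\{z^{k,n+1}\}$ also converges to $x^*$. Because the index $i$ ranges over a finite set, I can choose an iteration $\kz \ge \kj$ such that for all $k \ge \kz$ and all $i = 1,\ldots,n+1$,
\[
\norm{z^{k,i}-x^*}_\infty < \frac{\zeta^*}{2}.
\]

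Finally, for any $k \ge \kz$ I would bound both one-sided distances using the triangle inequality: since $j(k) \notin \Astar(x^*)$,
\[
z^{k,i}_{j(k)} - l_{j(k)} \ge (x^*_{j(k)} - l_{j(k)}) - \abs{z^{k,i}_{j(k)} - x^*_{j(k)}} \ge D_{j(k)}(x^*) - \frac{\zeta^*}{2} \ge \frac{\zeta^*}{2} > 0,
\]
and symmetrically $u_{j(k)} - z^{k,i}_{j(k)} \ge \zeta^*/2 > 0$, giving the claim. There is no real obstacle here; the only point that needs care is recognizing that $j(k)$ varies with $k$, which is handled by taking the minimum of $D_j(x^*)$ over the finite set of permissible indices to obtain the uniform constant $\zeta^*$, and by choosing $\kz$ uniformly over $i \in \{1,\ldots,n+1\}$.
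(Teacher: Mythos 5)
Your proof is correct and follows exactly the route the paper intends: the paper gives no detailed argument, stating only that the result ``immediately follows'' from Proposition~\ref{prop:j_active_set} combined with~\eqref{conv_z}, and your write-up supplies precisely the routine details (the uniform positive bound $\zeta^*$ over the finite index set, the inclusion of $i=n+1$ via $z^{k,n+1}=x^{k+1}$, and the triangle inequality) that make that ``immediately'' rigorous.
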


Now, we are ready to show that~\eqref{active_set_identification} holds for all sufficiently large iterations.
Our analysis takes inspiration from the one in~\cite{nutini:2019} for proximal gradient methods,
where it is proved that the active set is identified in a neighborhood of the optimal solution under the non-degeneracy assumption.
That neighborhood is defined in~\cite{nutini:2019} by using a problem-dependent constant related on ``the amount of degeneracy'' of the optimal solution.

Here, for a stationary point $x^*$ such that $\Astar^+(x^*) \ne \emptyset$, we define the following positive constant,
measuring the ``minimum amount of strict complementarity'' at $x^*$:
\begin{equation}\label{nondeg_measure}
\zeta(x^*) = \min_{i \in \Astar^+(x^*)} \abs{\nabla_i f(x^*) - \lambda^*},
\end{equation}
where $\lambda^*$ is the KKT multiplier associated to $x^*$, according to~\eqref{stat}.

\begin{theorem}\label{th:active_set_general_1}
Let $\{x^k\}$ be a sequence of points produced by AC2CD and assume that $\lim_{k \to \infty} x^k = x^*$.
Let $\kA$ be the first outer iteration such that
\begin{equation}\label{neighborhood_A}
\norm{z^{k,i}-x^*} < \neighA, \quad i = 1,\ldots,n, \quad \forall \, k \ge \kA,
\end{equation}
where $\zeta(x^*)>0$ is the minimum strict complementarity measure at $x^*$, defined as in~\eqref{nondeg_measure},
$L$ is the Lipschitz constant of $\nabla f$,
$A_l>0$ is the lower bound on the parameter $A^{k,i}$ used to compute the initial stepsize $\Delta^{k,i}$
in the Armijo line search (see~\eqref{initial_stepsize_armijo_1} and~\eqref{initial_stepsize_armijo_2}),
$\gamma \in (0,1)$ is the parameter for sufficient decrease in the Armijo line search
and $L^{\text{max}}>0$ is the maximum among the local Lipschitz constants $L_{i,j}$,
defined as in~\eqref{Lmax}.

Also assume that $\kA \ge \max\{\kj,\kz\}$, where $\kj$ is the first outer iteration such that $j(k) \notin \Astar(x^*)$ for all $k \ge \kj$, defined as in
Proposition~\ref{prop:j_active_set}, and $\kz$ is the first outer iteration  such that
$l_{j(k)} < z^{k,i}_{j(k)} < u_{j(k)}$, $i = 1,\ldots,n+1$, for all $k \ge \kz$, defined as in Proposition~\ref{prop:z_j}.

Then, for all $k > \kA$ we have that
\[
x^k_h = x^*_h, \quad \forall \, h \in \Astar^+(x^*).
\]
\end{theorem}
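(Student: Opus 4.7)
The plan is to analyze, for each $h\in\Astar^+(x^*)$, the single inner iteration $(k,i_h)$ of cycle $k$ at which the cyclically selected index $p^k_{i_h}$ equals $h$, and to show that this iteration drives $z^{k,i_h+1}_h$ exactly onto the bound $x^*_h$. The conclusion for the outer iterate $x^{k+1}_h=x^*_h$ then follows from~\eqref{ac2cd_prop_1}, since $h$ is not selected again during the remainder of the cycle.

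To begin, I would fix $k>\kA$ and $h\in\Astar^+(x^*)$, and without loss of generality assume $x^*_h=l_h$, so that strict complementarity yields $\nabla_h f(x^*)-\lambda^*\ge\zeta(x^*)$. Because $\kA\ge\kj$, Proposition~\ref{prop:j_active_set} gives $j(k)\notin\Astar(x^*)$; hence $h\ne j(k)$, so $i_h$ is well defined, and the KKT conditions~\eqref{stat} force $\nabla_{j(k)}f(x^*)=\lambda^*$. Splitting $g^{k,i_h}=[\nabla_{j(k)}f(z^{k,i_h})-\lambda^*]-[\nabla_h f(x^*)-\lambda^*]+[\nabla_h f(x^*)-\nabla_h f(z^{k,i_h})]$ and using Lipschitz continuity of $\nabla f$ together with~\eqref{neighborhood_A}, I get
\[
g^{k,i_h}\le -\zeta(x^*)+2L\norm{z^{k,i_h}-x^*}< -\frac{\zeta(x^*)\,M}{2L+M}<0,
\]
where $M=\maxAL$; in particular $g^{k,i_h}<0$ and $|g^{k,i_h}|>\zeta(x^*)M/(2L+M)$.

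Next, I would note that $z^{k,i_h}_h-l_h=z^{k,i_h}_h-x^*_h\le\norm{z^{k,i_h}-x^*}<\zeta(x^*)/(2L+M)$, so by~\eqref{max_stepsize_def} the maximum feasible stepsize satisfies $\bar\alpha^{k,i_h}\le(z^{k,i_h}_h-l_h)/|g^{k,i_h}|<1/M=\min\{A_l,\,2(1-\gamma)/L^{\max}\}$. Since $\bar\alpha^{k,i_h}<A_l\le A^{k,i_h}$, the initial stepsize in~\eqref{initial_stepsize_armijo_1} collapses to $\Delta^{k,i_h}=\bar\alpha^{k,i_h}$, and since in addition $\Delta^{k,i_h}<2(1-\gamma)/L^{\max}\le 2(1-\gamma)/L_{h,j(k)}$, Lemma~\ref{lemma:lower_bound_stepsize} forces the Armijo line search to accept the full initial step: $\alpha^{k,i_h}=\bar\alpha^{k,i_h}$.

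Finally, I would identify which argument achieves the minimum defining $\bar\alpha^{k,i_h}$. If it were $u_{j(k)}-z^{k,i_h}_{j(k)}$, then the full step would produce $z^{k,i_h+1}_{j(k)}=z^{k,i_h}_{j(k)}+\alpha^{k,i_h}|g^{k,i_h}|=u_{j(k)}$, contradicting Proposition~\ref{prop:z_j} (recall $\kA\ge\kz$). Hence $\bar\alpha^{k,i_h}=(z^{k,i_h}_h-l_h)/|g^{k,i_h}|$, and substituting into $z^{k,i_h+1}=z^{k,i_h}+\alpha^{k,i_h}d^{k,i_h}$ yields $z^{k,i_h+1}_h=l_h=x^*_h$. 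Since $p^k_{i'}\ne h$ for $i'>i_h$ and $h\ne j(k)$, the $h$-component remains fixed at $l_h$ throughout the rest of the cycle, giving $x^{k+1}_h=x^*_h$. The case $x^*_h=u_h$ is symmetric, with $g^{k,i_h}>0$ and the contradiction invoking the lower-bound half $l_{j(k)}<z^{k,i_h+1}_{j(k)}$ of Proposition~\ref{prop:z_j}. I expect the delicate point to be precisely this last contradiction: the neighborhood radius~\eqref{neighborhood_A} is calibrated so that Lemma~\ref{lemma:lower_bound_stepsize} forces the line search to take a full boundary step, which then lets Proposition~\ref{prop:z_j} rule out the wrong branch of~\eqref{max_stepsize_def}.
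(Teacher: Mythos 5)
Your argument is correct and follows essentially the same route as the paper's proof: use Proposition~\ref{prop:j_active_set} to get $j(k)\notin\Astar(x^*)$, bound $g^{k,i}$ via the Lipschitz constant and the calibrated neighborhood~\eqref{neighborhood_A} so that $\bar\alpha^{k,i}<\minAL$, invoke Lemma~\ref{lemma:lower_bound_stepsize} to force the full boundary step, and use Proposition~\ref{prop:z_j} to conclude the bound is hit by the $h$th rather than the $j(k)$th coordinate (your direct derivation of $g^{k,i_h}<0$ strictly even spares you the $g^{k,i}=0$ case the paper treats separately). The only slip is an off-by-one: you should start from $k\ge\kA$ rather than $k>\kA$, since concluding $x^{k+1}_h=x^*_h$ from $k>\kA$ leaves the iterate $x^{\kA+1}$ uncovered.
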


\begin{proof}
Consider an outer iteration $k \ge \kA$ and any index $h \in \Astar^+(x^*)$.
Moreover, let $(k,i)$ be the inner iteration where $p^k_i = h$.
Without loss of generality, let us assume that $x^*_h = l_h$ (the proof for the case where $x^*_h = u_h$ is analogous). Namely,
\begin{equation}\label{xstar_l}
x^*_h = l_h \quad \text{and} \quad \nabla_h f(x^*) > \lambda^*,
\end{equation}
where $\lambda^*$ is the KKT multiplier associated to $x^*$, according to the stationary conditions~\eqref{stat}.
Since $k \ge \kj$, from Proposition~\ref{prop:j_active_set} we have that
\begin{equation}\label{j(k)_N}
j(k) \notin \Astar(x^*),
\end{equation}
implying that $h \ne j(k)$.
Then, using~\eqref{j(k)_N} and the stationary conditions~\eqref{stat}, we get $\lambda^* = \nabla_{j(k)} f(x^*)$.
Recalling the definition of $\zeta (x^*)$, it follows that
\[
\zeta(x^*) \le \nabla_h f(x^*) - \nabla_{j(k)} f(x^*).
\]
Moreover, from the definition of $g^{k,i}$ given in~\eqref{g_d_def} we can write
\begin{equation*}
\begin{split}
\nabla_h f(x^*) - \nabla_{j(k)} f(x^*) + g^{k,i} & = \nabla_h f(x^*) - \nabla_{j(k)} f(x^*) + \nabla_{j(k)} f(z^{k,i}) - \nabla_h f(z^{k,i}) \\
                                                 & \le \abs{\nabla_h f(x^*) - \nabla_h f(z^{k,i})} + \abs{\nabla_{j(k)} f(z^{k,i}) - \nabla_{j(k)} f(x^*)} \\
                                                 & \le 2 \norm{\nabla f(x^*) - \nabla f(z^{k,i})} \le 2 L \norm{x^*-z^{k,i}},
\end{split}
\end{equation*}
and then,
\begin{equation}\label{zeta_proof}
\zeta(x^*) \le -g^{k,i} + 2 L \norm{x^*-z^{k,i}}.
\end{equation}
Now, we can rewrite~\eqref{neighborhood_A} by multiplying the numerator and the denominator of the right-hand side by
$\maxAL^{-1} = \minAL$, obtaining
\[
\norm{z^{k,i}-x^*} < \dfrac{\zeta(x^*)\minAL}{2L\minAL + 1}.
\]
Multiplying both sides of this inequality by the denominator of the right-hand side, we can write
\begin{equation}
\begin{split}\label{zki-xstar_proof}
\norm{z^{k,i}-x^*} & \overset{\phantom{\text{\eqref{zeta_proof}}}} = (\zeta(x^*)-2L\norm{z^{k,i}-x^*}) \minAL \\
                   & \overset{\text{\eqref{zeta_proof}}} \le -g^{k,i} \minAL.
\end{split}
\end{equation}
It follows that $g^{k,i} \le 0$. If $g^{k,i} = 0$, we have
\[
x^{k+1}_h \overset{\text{\eqref{ac2cd_prop_1}}} = z^{k,i+1}_h \overset{\text{\eqref{g_d_def}}} = z^{k,i}_h \overset{\text{\eqref{zki-xstar_proof}}} = x^*_h,
\]
and the desired result is thus obtained.
Now assume that $g^{k,i} < 0$. We can upper bound the largest feasible stepsize $\bar \alpha^{k,i}$ as follows:
\begin{equation}\label{max_stepsize_proof}
\begin{split}
\bar \alpha^{k,i} & \overset{\text{\eqref{max_stepsize_def}}} \le -\frac{z^{k,i}_h-l_h}{g^{k,i}}
\overset{\text{\eqref{xstar_l}}} = -\frac{z^{k,i}_h-x^*_h}{g^{k,i}}
\overset{\text{\eqref{zki-xstar_proof}}} < \minAL \\
& \overset{\text{\eqref{initial_stepsize_armijo_2}}} \le \min\biggl\{A^{k,i},\dfrac{2(1-\gamma)}{L^{\text{max}}}\biggr\},
\end{split}
\end{equation}
implying that $\bar \alpha^{k,i} < A^{k,i}$.
Taking into account that the initial stepsize $\Delta^{k,i}$ in the Armijo line search is chosen as in~\eqref{initial_stepsize_armijo_1},
we have that $\Delta^{k,i} = \bar \alpha^{k,i}$. So, using again~\eqref{max_stepsize_proof} we obtain
\[
\Delta^{k,i} = \bar \alpha^{k,i} < \dfrac{2(1-\gamma)}{L^{\text{max}}} \le \dfrac{2(1-\gamma)}{L_{h,j(k)}}.
\]
From Lemma~\ref{lemma:lower_bound_stepsize} we get that $\alpha^{k,i} = \bar \alpha^{k,i}$.
Since $\bar \alpha^{k,i}$ is the largest feasible stepsize along $d^{k,i}$,
(at least) one variable between $z^{k,i+1}_h$ and $z^{k,i+1}_{j(k)}$ will be at the lower or the upper bound.
Using the fact that $k \ge \kz$, from Proposition~\ref{prop:z_j} we have that $z^{k,i+1}_{j(k)} \in (l_{j(k)},u_{j(k)})$, and then
$z^{k,i+1}_h$ will be necessarily at the lower or the upper bound.
Since $g^{k,i} < 0$, from the definition of the search direction given in~\eqref{g_d_def} it follows that $z^{k,i+1}_h = l_h$.
Using~\eqref{ac2cd_prop_1} and~\eqref{xstar_l}, we finally have that $z^{k,i+1}_h = x^{k+1}_h$ and $l_h = x^*_h$,
yielding to the desired result.
\end{proof}

\begin{remark}
From~\eqref{conv_z}, there must exist an outer iteration $\kA$ such that~\eqref{neighborhood_A} holds,
provided the whole sequence $\{x^k\}$ converges to $x^*$ and $\Astar^+(x^*) \ne \emptyset$.
\end{remark}

\section{Active-set complexity}\label{sec:active_set_complexity}
In this section, the main result of the paper is presented:
under convexity of $f$ and a quadratic growth condition (satisfied by any strongly convex function),
it is possible to compute the maximum number of iterations required by AC2CD to identify the active set,
thus extending what obtained in the previous section.
Using the definition given in~\cite{nutini:2019},
we refer to the maximum number of iterations required to identify the active set as ``active-set complexity''.

To obtain the desired result, we first show how choosing the initial stepsize in the Armijo line search, in order to meet an additional requirement.
Then, we will show non-asymptotic sublinear convergence rate of AC2CD,
which, combined with Theorem~\ref{th:active_set_general_1}, will lead to the active-set complexity of the algorithm.

\subsection{Initial stepsize in the Armijo line search}\label{subsec:stepsize}
To obtain non-asymptotic sublinear convergence rate of AC2CD, for all $k \ge 0$ we need to satisfy
\begin{equation}\label{req_A_D}
l_{j(k)} < z^{k,i}_{j(k)} < u_{j(k)}, \quad i = 1,\ldots,n+1.
\end{equation}

Note that, in general, \eqref{req_A_D} holds only for sufficiently large $k$
(see the proof of Theorem~1 in~\cite{cristofari:2019}).
To satisfy~\eqref{req_A_D} for all $k \ge 0$ we can
use sufficiently small stepsizes in all the inner iterations, exploiting the fact that
$x^k_{j(k)} = z^{k,1}_{j(k)} \in (l_{j(k)}, u_{j(k)})$ for all $k \ge 0$.
In particular, to obtain a small stepsize $\alpha^{k,i}$ from the Armijo line search we must choose a small value of the initial stepsize $\Delta^{k,i}$.
Taking into account~\eqref{initial_stepsize_armijo_1}, this means that we must use a small value of $A^{k,i}$.
Anyway, we have to keep in mind that $A^{k,i}$ must satisfy~\eqref{initial_stepsize_armijo_2} as well.
A possible strategy is setting $A_u>0$, $\epsilon \in (0,1)$ and, at every inner iteration $(k,i)$, computing
\begin{equation}\label{Aki_2}
A^{k,i} =
\begin{cases}
\min\{\hat \alpha^{k,i}, A_u\}, \quad & \text{if } g^{k,i} \ne 0 \text{ (i.e., if $d^{k,i}$ is a non-zero direction)}, \\
A_u, & \text{otherwise},
\end{cases}
\end{equation}
where $\hat \alpha^{k,i}$ is the stepsize such that $D_{j(k)}(z^{k,i} + \hat \alpha^{k,i}d^{k,i}) = \epsilon D_{j(k)}(z^{k,i})$ when $g^{k,i} \ne 0$.
Note that $\hat \alpha^{k,i}$ may be infeasible and/or infinity.
Since $\alpha^{k,i} \le A^{k,i} \le \hat \alpha^{k,i}$, it follows that
$D_{j(k)}(z^{k,i+1}) = D_{j(k)}(z^{k,i} + \alpha^{k,i} d^{k,i}) \ge D_{j(k)}(z^{k,i} + \hat \alpha^{k,i} d^{k,i}) \ge \epsilon D_{j(k)}(z^{k,i})$.
Consequently,
\begin{equation}\label{D_j_z}
D_{j(k)}(z^{k,i+1}) \ge \epsilon^i D_{j(k)}(z^{k,1}) = \epsilon^i D_{j(k)}(x^k) > 0, \quad i = 1,\ldots,n.
\end{equation}
Then, this choice of $A^{k,i}$ satisfies~\eqref{req_A_D} for all $k \ge 0$.
To show that it also satisfies~\eqref{initial_stepsize_armijo_2}, we have to explicitly write the expression of $\hat \alpha^{k,i}$,
which can be obtained by simple calculations
(recall that $\hat \alpha^{k,i}$ is defined only when $g^{k,i} \ne 0$):
\begin{align*}
& \text{If $g^{k,i} > 0$,} \\
& \quad \hat \alpha^{k,i} =
\begin{cases}
(1-\epsilon)D_{j(k)}(z^{k,i})/g^{k,i}, & \text{if } D_{j(k)}(z^{k,i}) = z^{k,i}_{j(k)} - l_{j(k)}, \\
[z^{k,i}_{j(k)}-l_{j(k)}-\epsilon D_{j(k)}(z^{k,i})]/g^{k,i}, & \text{otherwise}; \\
\end{cases} \\
& \text{else if $g^{k,i} < 0$,} \\
& \quad \hat \alpha^{k,i} =
\begin{cases}
(1-\epsilon)D_{j(k)}(z^{k,i})/\abs{g^{k,i}}, & \text{if } D_{j(k)}(z^{k,i}) = u_{j(k)} - z^{k,i}_{j(k)}, \\
[u_{j(k)}-z^{k,i}_{j(k)}-\epsilon D_{j(k)}(z^{k,i})]/\abs{g^{k,i}}, & \text{otherwise}.
\end{cases}
\end{align*}
We see that, when $g^{k,i} \ne 0$,
we have $\hat \alpha^{k,i} \ge (1-\epsilon)D_{j(k)}(z^{k,i})/\abs{g^{k,i}}$.
Using~\eqref{Aki_2} and~\eqref{D_j_z}, it follows that
\[
\min\biggl\{\frac{(1-\epsilon)\epsilon^{i-1} D_{j(k)} (x^k)}{\abs{g^{k,i}}},A_u\biggr\} \le A^{k,i} \le A_u, \quad \text{if }  g^{k,i} \ne 0.
\]
Then,~\eqref{initial_stepsize_armijo_2} is satisfied with a proper value of $A_l$ which can be easily obtained,
since any non-zero $\abs{g^{k,i}}$ is less than or equal to $\max_{i,j = 1,\ldots,n} \{\nabla_j f(x)-\nabla_i f(x) \colon x \in \LS\}$
(which is finite by the assumption that the level set $\LS$ is compact) and, from~\eqref{j(k)}, we have
$\displaystyle{D_{j(k)}(x^k) \ge \tau \min_{x \in \LS} \, \max_{i = 1,\ldots,n} D_i(x)}$ (which is positive by Assumption~\ref{assumpt:l0_int_point}).

Many other strategies can be used to compute a value of $A^{k,i}$ that satisfies all the required conditions.
It is important to note that, in practice, $A^{k,i}$ should not be too small compared to the largest feasible stepsize $\bar \alpha^{k,i}$
(for a non-zero direction $d^{k,i}$),
otherwise the Armijo line search may produce extremely small stepsizes which can dramatically slow down the algorithm.
For example, $\epsilon$ should be sufficiently smaller than $1$ in the above described strategy.

In the rest of this section, we will assume $A^{k,i}$ to be computed in order to satisfy, together with~\eqref{initial_stepsize_armijo_2},
condition~\eqref{req_A_D} for all $k \ge 0$.

\subsection{Convergence rate analysis}
In this subsection we show that, when $f$ is convex, AC2CD has a non-asymptotic sublinear convergence rate.
Let us remark that the results reported here are completely different from those given in~\cite{cristofari:2019},
where a linear rate was obtained, but asymptotically,
whereas a non-asymptotic linear rate was proved only when there are no bounds on the variables
(both results are not useful in the analysis of the active-set complexity).

Our results here are obtained by adapting the analysis of the block coordinate gradient projection method in~\cite{beck:2013}
for minimization problems over the Cartesian product of closed convex sets.
In particular, with respect to~\cite{beck:2013}, major difficulties in our analysis come from the presence of the coupling constraint in the problem
and the absence of projection operations in the algorithm.
In such a context, the next lemma establishes a useful property of AC2CD.

\begin{lemma}\label{lemma:ac2cd_ineq}
For all $x^* \in X^*$, at every inner iteration $(k,i)$ of AC2CD we have that
\[
g^{k,i}(x^*_{p^k_i}-z^{k,i+1}_{p^k_i}) \le \T \abs{z^{k,i+1}_{p^k_i}-z^{k,i}_{p^k_i}}\abs{x^*_{p^k_i}-z^{k,i+1}_{p^k_i}},
\]
where
$A_l>0$ is the lower bound on the parameter $A^{k,i}$ used to compute the initial stepsize $\Delta^{k,i}$
in the Armijo line search (see~\eqref{initial_stepsize_armijo_1} and~\eqref{initial_stepsize_armijo_2}),
$\gamma \in (0,1)$ is the parameter for sufficient decrease in the Armijo line search,
$\delta \in (0,1)$ is the reduction parameter in the Armijo line search
and $L^{\text{max}}>0$ is the maximum among the local Lipschitz constants $L_{i,j}$, defined in~\eqref{Lmax}.
\end{lemma}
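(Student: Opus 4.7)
The plan is to split into two cases based on whether the Armijo stepsize $\alpha^{k,i}$ coincides with the maximum feasible stepsize $\bar\alpha^{k,i}$. The update relation $z^{k,i+1}_{p^k_i}-z^{k,i}_{p^k_i}=\alpha^{k,i}g^{k,i}$, which follows from~\eqref{g_d_def} and line~8 of Algorithm~\ref{alg:ac2cd}, provides the link between $g^{k,i}$ and the coordinate displacement appearing on the right-hand side of the target inequality.

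First I would dispose of the trivial case $g^{k,i}=0$, in which both sides vanish by~\eqref{ac2cd_prop_3}. Assuming $g^{k,i}\neq 0$, the \emph{boundary case} $\alpha^{k,i}=\bar\alpha^{k,i}$ is handled by inspecting~\eqref{max_stepsize_def}: one of $z^{k,i+1}_{p^k_i}$ or $z^{k,i+1}_{j(k)}$ must sit at one of its bounds. The standing hypothesis~\eqref{req_A_D} rules out the second possibility, so $z^{k,i+1}_{p^k_i}=u_{p^k_i}$ when $g^{k,i}>0$ and $z^{k,i+1}_{p^k_i}=l_{p^k_i}$ when $g^{k,i}<0$. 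In either case, feasibility of $x^*$ yields $g^{k,i}(x^*_{p^k_i}-z^{k,i+1}_{p^k_i})\le 0$, so the claimed inequality holds trivially since the right-hand side is non-negative.

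In the complementary \emph{interior case} $\alpha^{k,i}<\bar\alpha^{k,i}$, I would show the stepsize lower bound $\alpha^{k,i}\ge\min\{A_l,2\delta(1-\gamma)/L^{\text{max}}\}$, which is exactly the reciprocal of the constant appearing on the right-hand side of the lemma. The conclusion then follows immediately by combining $|g^{k,i}|\,\alpha^{k,i}=|z^{k,i+1}_{p^k_i}-z^{k,i}_{p^k_i}|$ with the absolute-value bound $g^{k,i}(x^*_{p^k_i}-z^{k,i+1}_{p^k_i})\le|g^{k,i}|\,|x^*_{p^k_i}-z^{k,i+1}_{p^k_i}|$. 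To establish the stepsize lower bound I would split by which term attains the minimum in~\eqref{initial_stepsize_armijo_1}: if $\Delta^{k,i}=A^{k,i}\ge A_l$, Lemma~\ref{lemma:lower_bound_stepsize} together with $L_{p^k_i,j(k)}\le L^{\text{max}}$ gives the bound directly; if instead $\Delta^{k,i}=\bar\alpha^{k,i}$, then $\alpha^{k,i}<\Delta^{k,i}$ forces Armijo backtracking, so by the same lemma $\Delta^{k,i}>2(1-\gamma)/L_{p^k_i,j(k)}$ and thus $\alpha^{k,i}>2\delta(1-\gamma)/L^{\text{max}}$.

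The main obstacle is actually the boundary case rather than the interior one: it is essential that the coordinate saturating its bound be $p^k_i$, not $j(k)$, because only the former produces the favourable sign on the left-hand side. This is precisely where the refined choice of $A^{k,i}$ described in the preceding subsection pays off---without~\eqref{req_A_D}, one could not exclude the scenario in which $z^{k,i+1}_{j(k)}$ hits a bound while $p^k_i$ stays interior, leaving no way to control the sign of $x^*_{p^k_i}-z^{k,i+1}_{p^k_i}$.
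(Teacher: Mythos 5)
Your proposal is correct and follows essentially the same route as the paper's proof: the same trivial case $g^{k,i}=0$, the same boundary case argument using~\eqref{req_A_D} to force $z^{k,i+1}_{p^k_i}$ (not $z^{k,i+1}_{j(k)}$) onto its bound and conclude via the sign of $g^{k,i}(x^*_{p^k_i}-z^{k,i+1}_{p^k_i})$, and the same interior-case stepsize lower bound $\alpha^{k,i}\ge\min\{A_l,2\delta(1-\gamma)/L^{\text{max}}\}$ obtained from Lemma~\ref{lemma:lower_bound_stepsize} by splitting on which term attains the minimum in~\eqref{initial_stepsize_armijo_1}. No gaps to report.
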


\begin{proof}
Consider any inner iteration $(k,i)$.
The result is trivial if $g^{k,i} = 0$, so we assume that $g^{k,i} \ne 0$ and distinguish two possible cases.
\begin{enumerate}[label=(\roman*), leftmargin=*]
\item First, assume that $\alpha^{k,i} = \bar \alpha^{k,i}$, that is, the largest feasible stepsize is used.
    This means that (at least) one variable between $z^{k,i+1}_{p^k_i}$ and $z^{k,i+1}_{j(k)}$ will be at the lower or the upper bound.
    Recalling that~\eqref{req_A_D} holds for all $k \ge 0$, necessarily $z^{k,i+1}_{p^k_i}$ will be at the lower or the upper bound.
    Using the definition of $\bar \alpha^{k,i}$ given in~\eqref{max_stepsize_def}, it follows that
    either $z^{k,i+1}_{p^k_i} = u_{p^k_i}$ if $g^{k,i} > 0$, or $z^{k,i+1}_{p^k_i} = l_{p^k_i}$ if $g^{k,i} < 0$,
    implying that $g^{k,i}(x^*_{p^k_i}-z^{k,i+1}_{p^k_i}) \le 0$ and the desired result is obtained.

\item Now, assume that $\alpha^{k,i} < \bar \alpha^{k,i}$, which implies that $\bar \alpha^{k,i} > 0$ and,
    from~\eqref{ac2cd_prop_3}, that $z^{k,i+1} \ne z^{k,i}$.
    Since $z^{k,i+1}_{p^k_i} = z^{k,i}_{p^k_i} + \alpha^{k,i} g^{k,i}$, it follows that $\alpha^{k,i} g^{k,i} \ne 0$.
    Recalling the definition of $g^{k,i}$ given in~\eqref{g_d_def}, this implies that $p^k_i \ne j(k)$.
    Moreover, we can write
    \[
    g^{k,i}(x^*_{p^k_i}-z^{k,i+1}_{p^k_i}) = \frac{(z^{k,i+1}_{p^k_i}-z^{k,i}_{p^k_i})(x^*_{p^k_i}-z^{k,i+1}_{p^k_i})}{\alpha^{k,i}}
    \le \frac{\abs{z^{k,i+1}_{p^k_i}-z^{k,i}_{p^k_i}}\abs{x^*_{p^k_i}-z^{k,i+1}_{p^k_i}}}{\alpha^{k,i}}.
    \]
    So, to obtain the desired result we have to show that
    \[
    \alpha^{k,i} \ge \min\biggl\{A_l,\frac{2\delta(1-\gamma)}{L^{\text{max}}}\biggr\}.
    \]
    To this extent, let us distinguish two further subcases, depending on
    whether $\Delta^{k,i} = \bar \alpha^{k,i}$ or $\Delta^{k,i} < \bar \alpha^{k,i}$,
    according to the definition of $\Delta^{k,i}$ given in~\eqref{initial_stepsize_armijo_1}.
    \begin{itemize}
    \item If $\Delta^{k,i} = \bar \alpha^{k,i}$, then $\alpha^{k,i} < \Delta^{k,i}$ (recall that we are considering the case $\alpha^{k,i} < \bar \alpha^{k,i}$)
        and, from Lemma~\ref{lemma:lower_bound_stepsize}, it follows that
        \[
        \alpha^{k,i} > \frac{2 \delta (1-\gamma)}{L_{p^k_i,j(k)}} \ge \min\biggl\{A_l, \frac{2 \delta (1-\gamma)}{L_{p^k_i,j(k)}}\biggr\}
        \ge \min\biggl\{A_l,\frac{2\delta(1-\gamma)}{L^{\text{max}}}\biggr\}.
        \]
    \item If $\Delta^{k,i} < \bar \alpha^{k,i}$, from~\eqref{initial_stepsize_armijo_1} we have $\Delta^{k,i} = A^{k,i}$.
        Using Lemma~\ref{lemma:lower_bound_stepsize} it follows that
        \begin{align*}
        \alpha^{k,i} & \ge \min\biggl\{\Delta^{k,i}, \frac{2 \delta (1-\gamma)}{L_{p^k_i,j(k)}}\biggr\}
                     = \min\biggl\{A^{k,i}, \frac{2 \delta (1-\gamma)}{L_{p^k_i,j(k)}}\biggr\} \\
                     & \ge \min\biggl\{A_l, \frac{2 \delta (1-\gamma)}{L_{p^k_i,j(k)}}\biggr\}
                     \ge \min\biggl\{A_l,\frac{2\delta(1-\gamma)}{L^{\text{max}}}\biggr\}.
        \end{align*}
    \end{itemize}
\end{enumerate}
\end{proof}

Now, we give a first result on the decrease in the objective function at every outer iteration.
\begin{proposition}\label{prop:obj_decrease_armijo}
At every outer iteration $k$ of AC2CD we have that
\[
f(x^k) - f(x^{k+1}) \ge \frac{\gamma}{A_u} \seminormjk{x^{k+1}-x^k}^2,
\]
where $A_u>0$ is the upper bound on the parameter $A^{k,i}$ used to compute the initial stepsize $\Delta^{k,i}$
in the Armijo line search (see~\eqref{initial_stepsize_armijo_1} and~\eqref{initial_stepsize_armijo_2}) and
$\gamma \in (0,1)$ is the parameter for sufficient decrease in the Armijo line search.
\end{proposition}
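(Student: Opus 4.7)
The plan is to telescope the Armijo decrease across the $n$ inner iterations of outer iteration $k$, rewrite each one-step decrease in terms of the coordinate displacement, and then use $\alpha^{k,i} \le A_u$ to pass from $\alpha^{k,i}(g^{k,i})^2$ to $(z^{k,i+1}_{p^k_i}-z^{k,i}_{p^k_i})^2/A_u$. The seminorm with index $j(k)$ excluded appears naturally because exactly one inner iteration touches each coordinate $h \ne j(k)$, while the unique inner iteration with $p^k_i = j(k)$ has $g^{k,i}=0$ and contributes nothing on either side.

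First, I would combine the Armijo acceptance condition at line~2 of Algorithm~\ref{alg:armijo} with the identity $\nabla f(z^{k,i})^T d^{k,i} = -(g^{k,i})^2$ from \eqref{ac2cd_prop_2} to obtain, for each inner iteration,
\[
f(z^{k,i}) - f(z^{k,i+1}) \ge \gamma\, \alpha^{k,i}\, (g^{k,i})^2.
\]
Summing over $i=1,\dots,n$ and using $z^{k,1}=x^k$ and $z^{k,n+1}=x^{k+1}$ (line~10 of Algorithm~\ref{alg:ac2cd}) telescopes to
\[
f(x^k) - f(x^{k+1}) \ge \gamma \sum_{i=1}^n \alpha^{k,i}\, (g^{k,i})^2.
\]

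Next, I would use the scalar update $z^{k,i+1}_{p^k_i} - z^{k,i}_{p^k_i} = \alpha^{k,i} g^{k,i}$, which follows from the definition of $d^{k,i}$ in \eqref{g_d_def}. When $\alpha^{k,i} > 0$ this yields
\[
\alpha^{k,i} (g^{k,i})^2 = \frac{(z^{k,i+1}_{p^k_i}-z^{k,i}_{p^k_i})^2}{\alpha^{k,i}} \ge \frac{(z^{k,i+1}_{p^k_i}-z^{k,i}_{p^k_i})^2}{A_u},
\]
where the inequality uses $\alpha^{k,i} \le \Delta^{k,i} \le A^{k,i} \le A_u$ from \eqref{initial_stepsize_armijo_1} and \eqref{initial_stepsize_armijo_2}. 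When $\alpha^{k,i}=0$, the relations in \eqref{ac2cd_prop_3} give $z^{k,i+1}=z^{k,i}$ and $g^{k,i}=0$, so both sides of the target per-iteration bound vanish and the inequality is trivially true.

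Finally, I would identify the sum $\sum_{i=1}^n (z^{k,i+1}_{p^k_i}-z^{k,i}_{p^k_i})^2$ with $\seminormjk{x^{k+1}-x^k}^2$. Since $\{p^k_1,\dots,p^k_n\}$ is a permutation of $\{1,\dots,n\}$, for each $h \ne j(k)$ there is exactly one inner iteration $(k,i)$ with $p^k_i=h$, and \eqref{ac2cd_prop_1} gives $z^{k,i}_{h}=x^k_h$ and $z^{k,i+1}_h=x^{k+1}_h$; the unique $i$ with $p^k_i=j(k)$ forces $g^{k,i}=0$ by \eqref{g_d_def}, contributing zero to the sum. Putting the three steps together gives the desired inequality. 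I do not expect any real obstacle: the only subtlety is bookkeeping the trivial cases ($\alpha^{k,i}=0$ and $p^k_i = j(k)$), and both reduce to $0 \ge 0$.
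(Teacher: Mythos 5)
Your proof is correct and follows essentially the same route as the paper's: Armijo decrease plus \eqref{ac2cd_prop_2} per inner iteration, the bound $\alpha^{k,i}\le A_u$ to convert $\alpha^{k,i}(g^{k,i})^2$ into $(z^{k,i+1}_{p^k_i}-z^{k,i}_{p^k_i})^2/A_u$, telescoping, and the identification of the sum with $\seminormjk{x^{k+1}-x^k}^2$ via \eqref{ac2cd_prop_1} and the vanishing of the $p^k_i=j(k)$ term. The only difference is cosmetic (you telescope before converting each term, the paper converts first), and your handling of the trivial cases matches the paper's.
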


\begin{proof}
First we show that at, every inner iteration $(k,i)$, we have
\begin{equation}\label{obj_decrease_armijo}
f(z^{k,i}) - f(z^{k,i+1}) \ge \frac{\gamma}{A_u} (z^{k,i+1}_{p^k_i}-z^{k,i}_{p^k_i})^2.
\end{equation}
If $\alpha^{k,i} = 0$, then $z^{k,i+1} = z^{k,i}$ and~\eqref{obj_decrease_armijo} trivially holds.
If $\alpha^{k,i} > 0$, from the instructions of the Armijo line search it follows that
$f(z^{k,i+1}) \le f(z^{k,i}) + \gamma \alpha^{k,i} \nabla f(z^{k,i})^T d^{k,i}$.
Using~\eqref{ac2cd_prop_2}, we can write
\[
f(z^{k,i+1}) \le f(z^{k,i}) - \gamma \alpha^{k,i} (g^{k,i})^2 = f(z^{k,i}) - \frac{\gamma}{\alpha^{k,i}} (\alpha^{k,i} g^{k,i})^2.
\]
Since $z^{k,i+1}_{p^k_i} = z^{k,i} + \alpha^{k,i} g^{k,i}$ and $\alpha^{k,i} \le A_u$, we obtain~\eqref{obj_decrease_armijo}.
Hence, we have
\begin{align*}
f(x^k) - f(x^{k+1}) & = \sum_{i=1}^n [f(z^{k,i})-f(z^{k,i+1})] \overset{\text{\eqref{obj_decrease_armijo}}}
                      \ge \frac{\gamma}{A_u} \sum_{i=1}^n (z^{k,i+1}_{p^k_i}-z^{k,i}_{p^k_i})^2 \\
                    & = \frac{\gamma}{A_u} \sum_{i \colon p^k_i \ne j(k)} (z^{k,i+1}_{p^k_i}-z^{k,i}_{p^k_i})^2
                      \overset{\text{\eqref{ac2cd_prop_1}}} = \frac{\gamma}{A_u} \sum_{i \colon p^k_i \ne j(k)} (x^{k+1}_{p^k_i}-x^{k}_{p^k_i})^2 \\
                    & = \frac{\gamma}{A_u} \seminormjk{x^{k+1}-x^k}^2,
\end{align*}
where, in the second equality, we have used the fact that $z^{k,i+1}_{j(k)} = z^{k,i}_{j(k)}$
when $p^k_i = j(k)$, according to the definition of the search direction $d^{k,i}$ given in~\eqref{g_d_def}.
\end{proof}

In the rest of this section, the objective function will be required to be convex over $\R^n$
and its optimal value for problem~\eqref{prob} will be denoted by $f^*$.
Let us also define the following constants
(which are finite under convexity of $f$, since this implies $X^* \subseteq \LS$, where the level set $\LS$ is assumed to be non-empty and compact):
\begin{align}
& \Rzero = \max_{\substack{j=1,\ldots,n \\ x \in \LS \\ x^* \in X^*}} \seminormj{x-x^*}, \label{Rzero} \\
& \Gstar = \max_{\substack{i,j = 1,\ldots,n \\ x^* \in X^*}} [\nabla_j f(x^*)-\nabla_i f(x^*)]. \label{Gstar}
\end{align}
We see that $\Rzero$ is the maximum distance between a point in the level set $\LS$ and a point in $X^*$,
where the distance is measured in terms of the pseudometrics induced by the seminorms $\seminormj{\cdot}$
(the latter can be upper bounded by the Euclidean norm, see~\eqref{seminorm_le_norm}).
From the KKT conditions~\eqref{stat}, we also note that $\Gstar$ is related to the minimum strict complementarity measure $\zeta(x^*)$ defined in~\eqref{nondeg_measure},
in the sense that, if $\Astar^+(x^*) \ne \emptyset$ for some $x^* \in X^*$, then $\Gstar \ge \zeta(x^*) > 0$,
while, if $\Astar^+(x^*) = \emptyset$ for all $x^* \in X^*$, then \mbox{$\Gstar = 0$ and} $\zeta(x^*)$ is not defined for any $x^* \in X^*$.
We can interpret $\Gstar$ as a measure of the ``maximum amount of strict complementarity'' over \mbox{the set $X^*$}.

We now state a result which, for every outer iteration,
relates the decrease in the objective function with the optimization error.

\begin{proposition}\label{prop:obj_decrease}
Assume that $f$ is convex over $\R^n$. Then, at every outer iteration $k$ of AC2CD we have that
\[
f(x^k) - f(x^{k+1}) \ge \dfrac{\gamma (f(x^{k+1}) - f^*)^2}{A_u (n-1) \fdec^2},
\]
where $A_l>0$ and $A_u>0$ are the lower and the upper bound, respectively, on the parameter $A^{k,i}$
used to compute the initial stepsize $\Delta^{k,i}$ in the Armijo line search (see~\eqref{initial_stepsize_armijo_1} and~\eqref{initial_stepsize_armijo_2}),
$\delta \in (0,1)$ is the reduction parameter in the Armijo line search,
$\gamma \in (0,1)$ is the parameter for sufficient decrease in the Armijo line search,
$L^{\text{max}}>0$ is the maximum among the local Lipschitz constants $L_{i,j}$, defined as in~\eqref{Lmax},
$\Lmaxj>0$ is the maximum among the constants $L_j = \sum_{i=1}^n L_{i,j}$, defined as in~\eqref{Lmaxj},
$\Rzero \ge 0$ is the maximum distance between a point in the level set $\LS$ and an optimal solution, defined as in~\eqref{Rzero},
and $\Gstar \ge 0$ is the maximum strict complementarity measure over $X^*$, defined as in~\eqref{Gstar}.
\end{proposition}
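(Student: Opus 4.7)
The plan is to combine the per-iteration descent bound of Proposition~\ref{prop:obj_decrease_armijo}, namely $f(x^k)-f(x^{k+1}) \ge (\gamma/A_u)\seminormjk{x^{k+1}-x^k}^2$, with an auxiliary linear upper bound on the optimality gap of the form $f(x^{k+1})-f^* \le \sqrt{n-1}\,\fdec \cdot \seminormjk{x^{k+1}-x^k}$; squaring the second estimate and dividing into the first then produces exactly the claimed rate. The real work is to establish this linear upper bound.

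I would start from convexity, $f(x^{k+1})-f^* \le \nabla f(x^{k+1})^T(x^{k+1}-x^*)$ for any fixed $x^*\in X^*$. Applying Lemma~\ref{lemma:prod} with $j=j(k)$ (which uses only feasibility $e^T(x^{k+1}-x^*)=0$) rewrites the right-hand side as a sum of products $[\nabla_h f(x^{k+1})-\nabla_{j(k)}f(x^{k+1})](x^{k+1}_h-x^*_h)$ over $h\ne j(k)$. Reindexing along the inner-loop permutation $(p^k_1,\ldots,p^k_n)$ (the index $i$ with $p^k_i=j(k)$ contributing zero) and introducing the algorithmic scalar $g^{k,i}$ through the decomposition $\nabla_{p^k_i}f(x^{k+1})-\nabla_{j(k)}f(x^{k+1}) = -g^{k,i}+E^{k,i}$, with $E^{k,i}:=\nabla_{p^k_i}f(x^{k+1})-\nabla_{j(k)}f(x^{k+1})+g^{k,i}$, splits the optimality gap into a ``main'' sum $T_2$ collecting the $-g^{k,i}$ parts and a ``residual'' sum $T_1$ collecting the $E^{k,i}$ parts.

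For $T_2$, I would use~\eqref{ac2cd_prop_1} to replace $x^{k+1}_{p^k_i}$ and $x^k_{p^k_i}$ by $z^{k,i+1}_{p^k_i}$ and $z^{k,i}_{p^k_i}$ (valid because $p^k_i\ne j(k)$), so that Lemma~\ref{lemma:ac2cd_ineq} bounds each summand by $\T\,\abs{x^{k+1}_{p^k_i}-x^k_{p^k_i}}\,\abs{x^{k+1}_{p^k_i}-x^*_{p^k_i}}$; the sum is then controlled via~\eqref{abs_vs_seminorm} on the first factor and~\eqref{sum_abs_vs_seminorm} on the second, with $\seminormjk{x^{k+1}-x^*}\le \Rzero$ since $x^{k+1}\in\LS$. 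For $T_1$, Corollary~\ref{corol:lips} with $v=x^{k+1}$ yields $\abs{E^{k,i}}\le \Lmaxj\,\seminormjk{x^{k+1}-z^{k,i}}$; the key observation is that, for $h\ne j(k)$, the partially updated iterate $z^{k,i}$ equals $x^{k+1}$ on the coordinates already updated before inner iteration $i$ and equals $x^k$ on the remaining ones, so $\seminormjk{x^{k+1}-z^{k,i}}\le \seminormjk{x^{k+1}-x^k}$ uniformly in $i$; after which~\eqref{sum_abs_vs_seminorm} and $\seminormjk{x^{k+1}-x^*}\le \Rzero$ finish the bound. Summing the two estimates gives $f(x^{k+1})-f^* \le \sqrt{n-1}(\T+\Lmaxj)\Rzero\,\seminormjk{x^{k+1}-x^k}$, which is in turn bounded by $\sqrt{n-1}\,\fdec\,\seminormjk{x^{k+1}-x^k}$ since $\Gstar\ge 0$ and $\T+2\Lmaxj \ge \T+\Lmaxj$.

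The main obstacle will be the bookkeeping for $T_1$: one must exploit the precise componentwise structure of $z^{k,i}$ to conclude that $\seminormjk{x^{k+1}-z^{k,i}}$ is dominated by $\seminormjk{x^{k+1}-x^k}$ uniformly in $i$, so that the summation over the $n-1$ inner iterations does not introduce an extra dimensional factor beyond the $\sqrt{n-1}$ already produced by~\eqref{sum_abs_vs_seminorm}.
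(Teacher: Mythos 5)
Your proposal is correct, and it reaches the claimed inequality by a more direct route than the paper, with the same ingredients (Lemma~\ref{lemma:prod}, Corollary~\ref{corol:lips}, Lemma~\ref{lemma:ac2cd_ineq}, Proposition~\ref{prop:obj_decrease_armijo}) assembled differently. The paper first passes from $g^{k,i}(x^k_{p^k_i}-x^*_{p^k_i})$ to $g^{k,i}(x^{k+1}_{p^k_i}-x^*_{p^k_i})$, performs your decomposition there, and after invoking convexity is left with the sum $\sum_{i\colon p^k_i\ne j(k)} g^{k,i}(x^*_{p^k_i}-x^k_{p^k_i})$; it must then split this as $\sum g^{k,i}(x^*_{p^k_i}-x^{k+1}_{p^k_i})+\sum g^{k,i}(x^{k+1}_{p^k_i}-x^k_{p^k_i})$, handling the first piece with Lemma~\ref{lemma:ac2cd_ineq} and the second by bounding $g^{k,i}\le \Lmaxj\Rzero+\Gstar$ via Corollary~\ref{corol:lips} with $v=x^*$ --- this detour is precisely what introduces the $\Gstar$ and the factor $2$ in $2\Lmaxj$ inside $\fdec$. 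You instead apply Lemma~\ref{lemma:ac2cd_ineq} directly to the term $\sum g^{k,i}(x^*_{p^k_i}-x^{k+1}_{p^k_i})$ that falls out of Lemma~\ref{lemma:prod} and the decomposition, so the second sum never appears, and you obtain the sharper intermediate bound $f(x^{k+1})-f^*\le\sqrt{n-1}\,\bigl(\T+\Lmaxj\bigr)\Rzero\seminormjk{x^{k+1}-x^k}$, which indeed implies the paper's $f(x^{k+1})-f^*\le\sqrt{n-1}\,\fdec\seminormjk{x^{k+1}-x^k}$ and hence the stated rate. The one point you flag as a potential obstacle, namely $\seminormjk{x^{k+1}-z^{k,i}}\le\seminormjk{x^{k+1}-x^k}$, is resolved exactly as you describe: by~\eqref{ac2cd_prop_1}, each coordinate $h\ne j(k)$ of $z^{k,i}$ equals either $x^k_h$ (not yet updated) or $x^{k+1}_h$ (already updated), and the paper uses the same fact. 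In short: your argument is valid, buys a slightly smaller constant, and avoids one auxiliary estimate; what it loses is only the literal form of the constant $\fdec$ that the paper reuses verbatim in Theorem~\ref{th:rate}.
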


\begin{proof}
Let $x^*$ be an optimal solution of problem~\eqref{prob} and consider any inner iteration $(k,i)$.
From the definition of the search direction $d^{k,i}$ given in~\eqref{g_d_def}, we have that $z^{k,i+1}_{p^k_i} \ge z^{k,i}_{p^k_i}$ if $g^{k,i} \ge 0$,
and $z^{k,i+1}_{p^k_i} \le z^{k,i}_{p^k_i}$ if $g^{k,i} \le 0$. Namely,
$g^{k,i} (z^{k,i}_{p^k_i}-z^{k,i+1}_{p^k_i}) \le 0$ and,
using~\eqref{ac2cd_prop_1}, we can write $g^{k,i} (x^k_{p^k_i}-x^{k+1}_{p^k_i}) \le 0$. Then,
\begin{equation*}
\begin{split}
g^{k,i}(x^k_{p^k_i}-x^*_{p^k_i}) & \le g^{k,i}(x^{k+1}_{p^k_i}-x^*_{p^k_i}) \\
                                 & = [\nabla_{p^k_i} f(x^{k+1}) - \nabla_{j(k)} f(x^{k+1})] (x^*_{p^k_i}-x^{k+1}_{p^k_i}) \\
                                 & \quad \, + [\nabla_{p^k_i} f(x^{k+1}) - \nabla_{j(k)} f(x^{k+1}) + g^{k,i}] (x^{k+1}_{p^k_i}-x^*_{p^k_i}). \\
\end{split}
\end{equation*}
Using Corollary~\ref{corol:lips} with $v=x^{k+1}$, we have that
\begin{equation*}
\begin{split}
\nabla_{p^k_i} f(x^{k+1}) - \nabla_{j(k)} f(x^{k+1}) + g^{k,i} & \overset{\phantom{\text{\eqref{ac2cd_prop_1}}}} \le \Lmaxj \seminormjk{z^{k,i}-x^{k+1}} \\
                                                               & \overset{\text{\eqref{ac2cd_prop_1}}} \le \Lmaxj \seminormjk{x^k-x^{k+1}}.
\end{split}
\end{equation*}
It follows that
\begin{equation*}
\begin{split}
g^{k,i}(x^k_{p^k_i}-x^*_{p^k_i}) & \le [\nabla_{p^k_i} f(x^{k+1}) - \nabla_{j(k)} f(x^{k+1})] (x^*_{p^k_i}-x^{k+1}_{p^k_i}) \\
                                 & \quad \, + \Lmaxj \seminormjk{x^k-x^{k+1}} \abs{x^*_{p^k_i}-x^{k+1}_{p^k_i}}.
\end{split}
\end{equation*}
Summing these inequalities, we obtain
\begin{equation*}
\begin{split}
\sum_{i \colon p^k_i \ne j(k)} g^{k,i}(x^k_{p^k_i}-x^*_{p^k_i})
& \overset{\phantom{\text{\eqref{sum_abs_vs_seminorm}}}} \le \sum_{i \colon p^k_i \ne j(k)}[\nabla_{p^k_i} f(x^{k+1}) - \nabla_{j(k)} f(x^{k+1})] (x^*_{p^k_i}-x^{k+1}_{p^k_i}) \\
& \quad \, + \Lmaxj \seminormjk{x^k-x^{k+1}} \sum_{i \colon p^k_i \ne j(k)}\abs{x^*_{p^k_i}-x^{k+1}_{p^k_i}} \\
& \overset{\text{\eqref{sum_abs_vs_seminorm}}} \le
  \sum_{i \colon p^k_i \ne j(k)} [\nabla_{p^k_i} f(x^{k+1}) - \nabla_{j(k)} f(x^{k+1})] (x^*_{p^k_i}-x^{k+1}_{p^k_i}) \\
& \quad \, + \sqrt{n-1} \, \Rzero \Lmaxj \seminormjk{x^k-x^{k+1}}.
\end{split}
\end{equation*}
Using Lemma~\ref{lemma:prod} with $v = \nabla f(x^{k+1})$, $x' = x^*$ and $x'' = x^{k+1}$, we can write
\begin{equation*}
\begin{split}
\nabla f(x^{k+1})^T (x^*-x^{k+1}) & = \innprod{\nabla f(x^{k+1})-\nabla_{j(k)} f(x^{k+1})e, x^*-x^{k+1}}_{j(k)} \\
                                  & = \sum_{i \colon p^k_i \ne j(k)} [\nabla_{p^k_i} f(x^{k+1}) - \nabla_{j(k)} f(x^{k+1})] (x^*_{p^k_i}-x^{k+1}_{p^k_i}),
\end{split}
\end{equation*}
and then,
\[
\sum_{i \colon p^k_i \ne j(k)} g^{k,i}(x^k_{p^k_i}-x^*_{p^k_i})
\le \nabla f(x^{k+1})^T (x^*-x^{k+1}) + \sqrt{n-1} \, \Rzero \Lmaxj \seminormjk{x^k-x^{k+1}}.
\]
From the convexity of $f$ we have that $f(x^{k+1})-f^* \le \nabla f(x^{k+1})^T (x^{k+1}-x^*)$. Hence,
\begin{equation*}
\begin{split}
f(x^{k+1})-f^* & \le \sum_{i \colon p^k_i \ne j(k)} g^{k,i}(x^*_{p^k_i}-x^k_{p^k_i}) + \sqrt{n-1} \, \Rzero \Lmaxj \seminormjk{x^k-x^{k+1}} \\
& = \sum_{i \colon p^k_i \ne j(k)} g^{k,i}(x^*_{p^k_i}-x^{k+1}_{p^k_i}) + \sum_{i \colon p^k_i \ne j(k)} g^{k,i}(x^{k+1}_{p^k_i}-x^k_{p^k_i}) \\
& \quad \, + \sqrt{n-1} \, \Rzero \Lmaxj \seminormjk{x^k-x^{k+1}}.
\end{split}
\end{equation*}
Using~\eqref{ac2cd_prop_1} and Lemma~\ref{lemma:ac2cd_ineq},
for all $i$ such that $p^k_i \ne j(k)$ we can write
\[
g^{k,i}(x^*_{p^k_i}-x^{k+1}_{p^k_i}) \le \T \abs{x^{k+1}_{p^k_i}-x^k_{p^k_i}}\abs{x^*_{p^k_i}-x^{k+1}_{p^k_i}}.
\]
Therefore,
\begin{equation}\label{f_decr_proof}
\begin{split}
f(x^{k+1})-f^* \le & \T \sum_{i \colon p^k_i \ne j(k)} \abs{x^{k+1}_{p^k_i}-x^k_{p^k_i}}\abs{x^*_{p^k_i}-x^{k+1}_{p^k_i}} \\
                   & + \sum_{i \colon p^k_i \ne j(k)} g^{k,i}(x^{k+1}_{p^k_i}-x^k_{p^k_i}) \\
                   & + \sqrt{n-1} \Rzero \Lmaxj \seminormjk{x^k-x^{k+1}}.
\end{split}
\end{equation}
To obtain the desired result, now we upper bound the two summations in the right-hand side of~\eqref{f_decr_proof} by appropriate constants.
\begin{itemize}
\item As for the first summation in the right-hand side of~\eqref{f_decr_proof}, using the fact that
    $\abs{x^*_{p^k_i}-x^{k+1}_{p^k_i}} \le \seminormjk{x^*-x^{k+1}}$ by~\eqref{abs_vs_seminorm},
    we can write
    \begin{equation}\label{f_decr_proof_ub1}
    \begin{split}
    \sum_{i \colon p^k_i \ne j(k)} \abs{x^{k+1}_{p^k_i}-x^k_{p^k_i}}\abs{x^*_{p^k_i}-x^{k+1}_{p^k_i}}
    & \overset{\phantom{\text{\eqref{sum_abs_vs_seminorm}}}} \le \Rzero \sum_{i \colon p^k_i \ne j(k)} \abs{x^{k+1}_{p^k_i}-x^k_{p^k_i}} \\
    & \overset{\text{\eqref{sum_abs_vs_seminorm}}} \le \sqrt{n-1}\Rzero\seminormjk{x^{k+1}-x^k}.
    \end{split}
    \end{equation}
\item As for the second summation in the right-hand side of~\eqref{f_decr_proof}, from the triangular inequality we have that
    \[
    g^{k,i} \le \abs{g^{k,i} + \nabla_{p^k_i} f(x^*) - \nabla_{j(k)} f(x^*)} + \abs{\nabla_{p^k_i} f(x^*) - \nabla_{j(k)} f(x^*)},
    \]
    and then, using Corollary~\ref{corol:lips} with $v = x^*$,
    \[
    g^{k,i} \le L_{j(k)} \seminormjk{x^*-z^{k,i}} + \abs{\nabla_{p^k_i} f(x^*) - \nabla_{j(k)} f(x^*)} \le \Lmaxj \Rzero + \Gstar.
    \]
    Taking into account~\eqref{sum_abs_vs_seminorm}, we get
    \begin{equation}\label{f_decr_proof_ub2}
    \sum_{i \colon p^k_i \ne j(k)} g^{k,i}(x^{k+1}_{p^k_i}-x^k_{p^k_i}) \le (\Lmaxj \Rzero + \Gstar) \sqrt{n-1} \seminormjk{x^{k+1}-x^k}.
    \end{equation}
\end{itemize}
Combining~\eqref{f_decr_proof} with~\eqref{f_decr_proof_ub1} and~\eqref{f_decr_proof_ub2}, we have that
\[
f(x^{k+1})-f^* \le \sqrt{n-1} \fdec \seminormjk{x^{k+1}-x^k}.
\]
Using Proposition~\ref{prop:obj_decrease_armijo}, the desired result is finally obtained.
\end{proof}

We are now ready to show the non-asymptotic sublinear convergence rate of AC2CD.
\begin{theorem}\label{th:rate}
Assume that $f$ is convex over $\R^n$. Then, at every outer iteration $k \ge 1$ of AC2CD we have that
\[
f(x^k) - f^* \le \frac C k,
\]
where $C$ is equal to
\[
\rate,
\]
$A_l>0$ and $A_u>0$ are the lower and the upper bound, respectively, on the parameter $A^{k,i}$
used to compute the initial stepsize $\Delta^{k,i}$ in the Armijo line search (see~\eqref{initial_stepsize_armijo_1} and~\eqref{initial_stepsize_armijo_2}),
$\delta \in (0,1)$ is the reduction parameter in the Armijo line search,
$\gamma \in (0,1)$ is the parameter for sufficient decrease in the Armijo line search,
$L^{\text{max}}>0$ is the maximum among the local Lipschitz constants $L_{i,j}$, defined as in~\eqref{Lmax},
$\Lmaxj>0$ is the maximum among the constants $L_j = \sum_{i=1}^n L_{i,j}$, defined as in~\eqref{Lmaxj},
$\Rzero \ge 0$ is the maximum distance between a point in the level set $\LS$ and an optimal solution, defined as in~\eqref{Rzero},
and $\Gstar \ge 0$ is the maximum strict complementarity measure over $X^*$, defined as in~\eqref{Gstar}.
\end{theorem}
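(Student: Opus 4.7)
The plan is to turn the one-step descent inequality of Proposition~\ref{prop:obj_decrease} into a sublinear $1/k$ rate by the classical ``reciprocal'' argument, combined with a careful two-case split and a dedicated base-case bound so that the constants in the statement are matched. Let $e_k := f(x^k) - f^*$, which is non-negative by convexity. Proposition~\ref{prop:obj_decrease} gives
\[
e_k - e_{k+1} \;\ge\; \frac{\gamma}{A_u (n-1) \fdec^2}\, e_{k+1}^2.
\]
Since $\fdec \le \fdecden$ (as $\Gstar \ge 0$), I will work with the harmlessly weakened form
\[
e_k - e_{k+1} \;\ge\; \tilde\beta \, e_{k+1}^2, \qquad \tilde\beta \;:=\; \frac{\gamma}{A_u\,(n-1)\,\fdecden^2},
\]
which is exactly the shape in which the constants of the theorem appear naturally.

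The induction on $k \ge 1$ aimed at $e_k \le C/k$ then proceeds through the standard reciprocal inequality
\[
\frac{1}{e_{k+1}} - \frac{1}{e_k} \;=\; \frac{e_k - e_{k+1}}{e_k e_{k+1}} \;\ge\; \tilde\beta\,\frac{e_{k+1}}{e_k},
\]
combined with a split at the threshold $2/3$. \emph{Case (a):} if $e_{k+1} \ge (2/3)\,e_k$, the right-hand side is at least $2\tilde\beta/3$, and the hypothesis $1/e_k \ge k/C$ yields $1/e_{k+1} \ge (k+1)/C$ whenever $C \ge 3/(2\tilde\beta) = 3 A_u (n-1) \fdecden^2/(2\gamma)$; this is precisely the first argument of the $\max$ in the statement, once the external $\sqrt{n-1}$ factor is absorbed. \emph{Case (b):} if $e_{k+1} < (2/3)\,e_k$, the hypothesis directly gives $e_{k+1} < 2C/(3k)$, and the elementary algebraic inequality $2C/(3k) \le C/(k+1)$ holds precisely when $k \ge 2$, which closes the inductive step.

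The \emph{main obstacle} is that case~(b) is vacuous at $k=1$, so $e_2 \le C/2$ must be obtained by a separate base-case argument, and this is where the second argument $\sqrt{n-1}\,\fdecden^2/L^{\text{max}}$ of the $\max$ enters. My plan is to bound $e_1$ by rerunning the convexity-plus-descent chain from the proof of Proposition~\ref{prop:obj_decrease} at $k=0$, but invoking the alternative universal stepsize lower bound $\alpha^{0,i} \ge 2\delta(1-\gamma)/L^{\text{max}}$ of Lemma~\ref{lemma:lower_bound_stepsize} in place of the $A_l$-based one used inside $\T$, and correspondingly replacing the $1/A_u$ in Proposition~\ref{prop:obj_decrease_armijo} by an $L^{\text{max}}$-based quantity. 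Tracing these substitutions through the same estimates yields a bound on $e_1$ of the form $\sqrt{n-1}\,\fdecden^2/L^{\text{max}}$, so that $e_1 \le C$ and one further application of the recursion gives $e_2 \le C/2$; after that, the (a)/(b) induction runs unchanged for every $k \ge 2$. The most delicate point is precisely this constant-matching in the base case: choosing the threshold $2/3$ rather than $1/2$ tightens the inductive constant (from $2/\tilde\beta$ down to $3/(2\tilde\beta)$) at the cost of starting the induction at $k=2$, and relaxing $\fdec$ to $\fdecden$ is the simple device that lets a single expression serve both in the inductive step and in the base-case bound on $e_1$.
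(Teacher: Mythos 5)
Your inductive machinery (the $2/3$-threshold case split) is a correct hand re-derivation of the recursion lemma that the paper simply cites from Beck--Tetruashvili, and both routes correctly reduce the theorem to the base cases $f(x^1)-f^*\le C$ and $f(x^2)-f^*\le C/2$. The gap is in how you propose to establish those base cases, and it is genuine. First, the stepsize facts you invoke do not exist: Lemma~\ref{lemma:lower_bound_stepsize} gives $\alpha^{k,i}\ge\min\bigl\{\Delta^{k,i},\,2\delta(1-\gamma)/L_{p^k_i,j(k)}\bigr\}$, not a ``universal'' lower bound $2\delta(1-\gamma)/L^{\text{max}}$ --- when the boundary is close, $\Delta^{0,i}=\bar\alpha^{0,i}$ can be arbitrarily small. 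Symmetrically, the $1/A_u$ in Proposition~\ref{prop:obj_decrease_armijo} comes from an \emph{upper} bound on the stepsize ($\alpha^{k,i}\le\Delta^{k,i}\le A_u$), and no $L^{\text{max}}$-based upper bound is available: the lemma allows $\alpha^{k,i}=\Delta^{k,i}$ all the way up to $A_u$. So the substitution you describe cannot be ``traced through the same estimates.'' Second, even granting $e_1\le C$, one application of $e_1-e_2\ge\tilde\beta e_2^2$ with $\tilde\beta=3/(2C)$ (the worst case, attained when the first argument of the $\max$ is active) yields only $e_2\le\tfrac{\sqrt7-1}{3}\,C\approx 0.549\,C$, which is not $\le C/2$; the sentence ``one further application of the recursion gives $e_2\le C/2$'' is false as stated.

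The paper closes both base cases at once by an argument that bypasses the descent recursion entirely: for every $k\ge1$ monotonicity gives $x^k\in\LS$, and Lemma~\ref{lemma:prod}, inequality~\eqref{sum_abs_vs_seminorm} and Lemma~\ref{lemma:lips_descent} give $f(x^k)-f^*\le\sqrt{n-1}\,\Gstar\Rzero+\tfrac12\Lmaxj(\Rzero)^2$, which is shown to be at most $C/2$ by applying $(\theta_1+\theta_2+\theta_3)^2\ge2\theta_1(\theta_2+\theta_3)$ inside the lower bound $C\ge\frac{\sqrt{n-1}}{L^{\text{max}}}\fdecden^2$. This is exactly where the ``$2\Gstar$'' appearing in $C$ (versus the ``$\Gstar$'' of Proposition~\ref{prop:obj_decrease}) is spent; your relaxation of $\fdec$ to $\fdecden$ in the recursion discards precisely the slack that the base case needs. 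You need a direct bound of this kind on $f(x^1)-f^*$ and $f(x^2)-f^*$ to complete the proof.
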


\begin{proof}
Consider a sequence $\{a^k\}$ of nonnegative scalars such that $a^k - a^{k+1} \ge \beta (a^{k+1})^2$, for all $k \ge 0$, with $\beta >0$.
From Lemma~6.2 in~\cite{beck:2013} we have that, if $a^1 \le 3/(2\beta)$ and $a^2 \le 3/(4\beta)$, then $a^k \le 3/(2\beta k)$, for all $k \ge 1$.
Using $a^k = f(x^k) - f^*$,
in view of Proposition~\ref{prop:obj_decrease} we have that $a^k - a^{k+1} \ge \beta (a^{k+1})^2$ with $\beta \ge 3/(2C)$.
It follows that the desired result is obtained if
\begin{equation}\label{err_x1_and_x2}
f(x^1)-f^* \le C \quad \text{and} \quad f(x^2)-f^* \le \frac C 2.
\end{equation}
To show that~\eqref{err_x1_and_x2} holds, by definition of $C$ we first write
\begin{equation}\label{ineq_C_1}
\begin{split}
C & \ge \frac{\sqrt{n-1}}{L^{\text{max}}} \fdecden^2 \\
  & \ge \frac{\sqrt{n-1}}{L^{\text{max}}} \biggl[\biggl(\frac{L^{\text{max}}}2+ 2\Lmaxj\biggr)\Rzero + 2\Gstar\biggr]^2,
\end{split}
\end{equation}
where the last inequality follows from the fact that $2\delta(1-\gamma) \le 2$, since $\delta, \gamma \in (0,1)$.
Now, we use the trivial inequality $(\theta_1 + \theta_2 + \theta_3)^2 \ge 2 \theta_1 (\theta_2 + \theta_3)$,
holding for all $\theta_1,\theta_2,\theta_3 \in \R$, with the choice
$\theta_1 = L^{\text{max}}\Rzero/2, \theta_2 = 2\Lmaxj\Rzero, \theta_3 = 2\Gstar$. We get
\begin{equation}\label{ineq_C_2}
C \ge 2\sqrt{n-1} \Rzero (\Lmaxj\Rzero + \Gstar) \ge 2\biggl[\frac{\Lmaxj}2(\Rzero)^2 + \sqrt{n-1}\Gstar\Rzero\biggr],
\end{equation}
where the last inequality follows from the fact that we are assuming $n \ge 2$.

Now consider an outer iteration $k \ge 1$,
picking any $x^* \in X^*$ and any $j \in \{1,\ldots,n\}$.
Using Lemma~\ref{lemma:prod} with $v = \nabla f(x^*)$, $x'=x^k$ and $x''=x^*$, we have
\begin{equation*}
\begin{split}
\nabla f(x^*)^T (x^k-x^*) & \overset{\phantom{\text{\eqref{sum_abs_vs_seminorm}}}} = \innprod{\nabla f(x^*)-\nabla_j f(x^*)e, x^k-x^*}_j \\
                          & \overset{\phantom{\text{\eqref{sum_abs_vs_seminorm}}}} = \sum_{h \ne j} [\nabla_h f(x^*) - \nabla_j f(x^*)] (x^k_h-x^*_h) \\
                          & \overset{\text{\eqref{sum_abs_vs_seminorm}}} \le \sqrt{n-1} \Gstar \seminormj{x^k-x^*}.
\end{split}
\end{equation*}
So, using Lemma~\ref{lemma:lips_descent} with $x' = x^*$ and $x'' = x^k$, we get
\[
f(x^k) - f^* \le \nabla f(x^*)^T (x^k-x^*) + \frac {L_j}2 \seminormj{x^*-x^k}^2
             \le \sqrt{n-1} \Gstar \Rzero + \frac {\Lmaxj}2 (\Rzero)^2.
\]
In view of~\eqref{ineq_C_2}, we conclude that $f(x^k)-f^* \le C/2$, implying that~\eqref{err_x1_and_x2} holds.
\end{proof}

A question that can naturally arise is whether the constant $C$ in Theorem~\ref{th:rate} is tight.
To answer this challenging question, we can look in detail at the steps of the above proofs, from which it seems
that $C$ may in fact be loose.
For example, in the proof of Theorem~\ref{th:rate} we got a lower bound for $C$ by decomposing the last term in~\eqref{ineq_C_1}
as the sum of $\Lmaxj(\Rzero)^2 + 2\sqrt{n-1}\Gstar\Rzero$ and
\[
(2\sqrt{n-1}-1)\Lmaxj(\Rzero)^2 + \sqrt{n-1}\biggl[\frac{L^{\text{max}}(\Rzero)^2}4 + 4\frac{(\Lmaxj\Rzero)^2 + (\Gstar)^2 + 2\Lmaxj\Gstar\Rzero}{L^{\text{max}}}\biggr].
\]
We then obtained~\eqref{ineq_C_2} by lower bounding the above quantity by $0$.
But the above quantity may be much larger than $0$
and, for large values of $n$ and $\Gstar$, even dominant over $\Lmaxj(\Rzero)^2 + 2\sqrt{n-1}\Gstar\Rzero$,
observing that $\Lmaxj = \xi L^{\text{max}}$, with $\xi \in [1,n-1]$,
as we see from~\eqref{Lmax}, \eqref{Lj} and~\eqref{Lmaxj}.

In the literature, a non-asymptotic convergence rate was also shown for other coordinate descent methods on
different settings with one or more linear constraints,
where the working set is chosen by random selection~\cite{necoara:2013,necoara:2017,necoara:2014,patrascu:2015,she:2017}
or by rules based on first-order optimality violation~\cite{beck:2014,jaggi:2015}.
In particular, just like AC2CD, random coordinate descent do not use $\nabla f$ to choose the working set.
A sublinear rate (in expectation) with respect to the objective values was shown
for random coordinate descent in~\cite{necoara:2014} under convexity of $f$,
and a linear rate (in expectation) was shown in~\cite{she:2017} under the additional assumption of proximal-PL inequality.
We note that the sublinear rate $f(x^k)-f^* \le n^2L(\bar \Rzero)^2/[k + n^2L(\bar \Rzero)^2/(f(x^0)-f^*)]$
obtained for random coordinate descent in~\cite{necoara:2014},
where $\bar \Rzero = \max_x\{ \max_{x^* \in X^*} \norm{x-x^*} \colon x \in \LS\}$,
holds with respect to the inner iterations, so $k$ should be multiplied by a factor $\mathcal O(n)$
to have a fair comparison with AC2CD, for which the rate was computed with respect to the outer iterations.
With this adjustment, the rate of random coordinate descent is however better than $f(x^k)-f^* \le  C/k$ obtained for AC2CD,
with the constant $C$ from Theorem~\ref{th:rate} being
$\mathcal O(n(nL^{\text{max}}\Rzero+\Gstar)^2)$ if we reasonably assume $\sqrt n \gg 1/L^{\text{max}}$
and consider $\Lmaxj = \mathcal O (nL^{\text{max}})$ (since $\Lmaxj = \xi L^{\text{max}}$, with $\xi \in [1,n-1]$,
as observed above), where $L^{\text{max}} \le 2L$ from~\eqref{local_lipschitz}.

These results seem in agreement with the unconstrained case,
where cyclic coordinate selection achieves worse convergence rate than random selection and Gauss-Southwell-type rules~\cite{beck:2013,nutini:2015},
even if practical performances of the algorithms usually depend on the specific features of the problems.

\subsection{Computation of the active-set complexity}\label{subsec:active_set_complexity}
Using all the previous results, we can now compute the active-set complexity of AC2CD, that is,
the maximum number of iterations required by the algorithm to identify the active set.
In particular, we give an upper bound for $\kA$ appearing in Theorem~\ref{th:active_set_general_1}
under convexity of $f$ and a quadratic growth condition, which is now described.

We assume that there exists $\mu>0$ such that
\begin{equation}\label{quadratic}
f(x) - f^* \ge \frac{\mu} 2 \norm{x-x^*}^2, \quad \forall \, x \in \LS,
\end{equation}
where $x^* \in X^*$.
Note that~\eqref{quadratic} is automatically satisfied if $f$ is $\mu$-strongly convex over $\LS$~\cite{nesterov:2013}.
However,~\eqref{quadratic} is a weaker condition than strong convexity of $f$ over $\LS$, since
there exist convex functions that satisfy~\eqref{quadratic} even if they are non-strongly convex.
This can be seen in the following example, obtained from~\cite{necoara:2019} with proper adjustments.
Note that, in the provided example, $f$ is not even strictly convex, there is a unique optimal solution $x^*$ (so that $\{x^k\} \to x^*$)
and Assumption~\ref{assumpt:l0_int_point} is satisfied.

\begin{example}
Consider the following convex problem:
\begin{equation*}
\begin{split}
& \min \, f(x) = \frac12 x_1^2 + \sum_{i=2}^n x_i \\
& e^T x = 0 \\
& x_1 \ge -1 \\
& x_i \ge 0, \quad i = 2,\ldots,n,
\end{split}
\end{equation*}
with arbitrary dimension $n \ge 3$. Since the smallest eigenvalue of the Hessian matrix of $f$ is equal to $0$,
then $f$ is not strongly convex.
Actually, $f$ is not even strictly convex, since $f(\omega x' + (1-\omega)x'') = \omega f(x') + (1-\omega)f(x'')$ for all $\omega \in [0,1]$
and any distinct feasible points $x',x''$ such that $x'_1 = x''_1$.
We also have that $x^* = 0$ is the unique optimal solution and $f^* = 0$.
We conclude that~\eqref{quadratic} is satisfied with $\mu = 1$, since
$f(x) - f^* = \frac12 x_1^2 + \sum_{i=2}^n x_i \ge \frac12 \sum_{i=1}^n x^2_i = \frac12 ||x-x^*||^2$ for all feasible $x$.
\end{example}

\begin{theorem}\label{th:active_set_complexity_1}
The following upper bound holds for $\kA$ appearing in Theorem~\ref{th:active_set_general_1}
if $f$ is convex over $\R^n$ and statisfies~\eqref{quadratic}:
\[
\kA \le \left\lfloor\frac{2C}{\mu}\max\left\{\biggl(\neighj\biggr)^{-2}, \left(\neighA\right)^{-2}\right\}\right\rfloor+1,
\]
where $C \ge 0$ is the constant of the sublinear convergence rate defined in Theorem~\ref{th:rate},
$\Dmaxstar>0$ is the maximum distance from the bounds at $x^*$, defined as in Proposition~\ref{prop:j_active_set},
$\zeta(x^*)>0$ is the minimum strict complementarity measure at $x^*$, defined as in~\eqref{nondeg_measure},
$L$ is the Lipschitz constant of $\nabla f$,
$A_l>0$ is the lower bound on the parameter $A^{k,i}$ used to compute the initial stepsize $\Delta^{k,i}$
in the Armijo line search (see~\eqref{initial_stepsize_armijo_1} and~\eqref{initial_stepsize_armijo_2}),
$L^{\text{max}}>0$ is the maximum among the local Lipschitz constants $L_{i,j}$, defined as in~\eqref{Lmax},
and $\tau \in (0,1]$ is the parameter used to choose $j(k)$, satisfying~\eqref{j(k)}.
\end{theorem}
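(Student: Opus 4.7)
The plan is to combine $\mu$-strong convexity of $f$ over $\LS$ with the non-asymptotic sublinear rate from Theorem~\ref{th:rate} in order to translate each of the defining conditions on $\kj$, $\kz$ and the neighborhood~\eqref{neighborhood_A} into an explicit iteration count, and then invert a square-root inequality.

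First I would show that every inner iterate lies in the level set. Since the Armijo condition in Algorithm~\ref{alg:armijo} enforces $f(z^{k,i+1}) \le f(z^{k,i})$, we have $f(z^{k,i}) \le f(z^{k,1}) = f(x^k) \le f(x^0)$, so $z^{k,i} \in \LS$ for all $k$ and all $i = 1,\ldots,n+1$. Then $\mu$-strong convexity of $f$ over $\LS$ gives
\[
\frac{\mu}{2}\norm{z^{k,i} - x^*}^2 \le f(z^{k,i}) - f^* \le f(x^k) - f^*.
\]
Plugging in the estimate $f(x^k) - f^* \le C/k$ from Theorem~\ref{th:rate} yields the uniform bound
\[
\norm{z^{k,i} - x^*} \le \sqrt{\frac{2C}{\mu k}}, \quad i = 1,\ldots,n+1, \quad \forall\, k \ge 1,
\]
which, thanks to $\norm{x^k-x^*}_\infty \le \norm{x^k-x^*} = \norm{z^{k,1}-x^*}$, also controls the $\infty$-norm quantity appearing in the defining condition of $\kj$.

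Next I would invert this inequality against each of the two radii in the statement. The defining condition of $\kj$ in Proposition~\ref{prop:j_active_set}, namely $\norm{x^k-x^*}_\infty < \neighj$, is enforced as soon as $\sqrt{2C/(\mu k)} < \neighj$, that is, $k > (2C/\mu)\biggl(\neighj\biggr)^{-2}$. Similarly, condition~\eqref{neighborhood_A} from Theorem~\ref{th:active_set_general_1} is enforced as soon as $k > (2C/\mu)\left(\neighA\right)^{-2}$. Taking the maximum of the two thresholds and using the elementary fact that the smallest integer strictly exceeding a real number $T$ equals $\lfloor T \rfloor + 1$ delivers the claimed upper bound on $\kA$.

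The main obstacle is verifying that no extra term is required to also absorb $\kz$ from Proposition~\ref{prop:z_j}. Here I would argue that once $k \ge \kj$ we already have $j(k) \notin \Astar(x^*)$, and hence $D_{j(k)}(x^*) > 0$, so the very same uniform bound $\norm{z^{k,i}-x^*} \le \sqrt{2C/(\mu k)}$ pushes $z^{k,i}_{j(k)}$ strictly inside $(l_{j(k)}, u_{j(k)})$, which is exactly the conclusion of Proposition~\ref{prop:z_j}; this bookkeeping shows that the radius $\neighj$ governing $\kj$ already controls $\kz$ without enlarging the threshold. The remaining steps are routine, and the essence of the argument reduces to the chain: strong convexity $\Rightarrow$ sublinear rate on $f(x^k)-f^*$ $\Rightarrow$ square-root rate on $\norm{z^{k,i}-x^*}$ $\Rightarrow$ explicit active-set identification complexity.
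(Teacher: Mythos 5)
The core of your argument---strong convexity gives $\tfrac{\mu}{2}\norm{z^{k,i}-x^*}^2 \le f(z^{k,i})-f^* \le f(x^k)-f^* \le C/k$, then invert against the two radii---is exactly the paper's chain, and that part is fine. The genuine gap is in your last paragraph, where you claim the radius $\neighj$ governing $\kj$ ``already controls $\kz$ without enlarging the threshold.'' This does not follow. Knowing $j(k)\notin\Astar(x^*)$ only gives $D_{j(k)}(x^*)>0$; it does not give a quantitative lower bound on $D_{j(k)}(x^*)$ in terms of $\neighj$. Indeed, from the proof of Proposition~\ref{prop:j_active_set} one only gets $D_{j(k)}(x^k) > \frac{\tau}{\tau+1}\Dmaxstar$, hence $D_{j(k)}(x^*) > \frac{\tau}{\tau+1}\Dmaxstar - \norm{x^k-x^*}_\infty$, which under your threshold is merely positive and can be arbitrarily small. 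Consequently $\norm{z^{k,i}-x^*} < \neighj$ does \emph{not} imply $z^{k,i}_{j(k)} \in (l_{j(k)},u_{j(k)})$; even a more careful version of this argument (bounding $D_{j(k)}(z^{k,i}) \ge D_{j(k)}(x^k) - |z^{k,i}_{j(k)}-x^k_{j(k)}|$) forces a loss of a factor of $2$ in the radius and hence a larger constant than the one in the statement.

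The paper avoids this entirely: at the end of Section~\ref{sec:active_set_complexity}.1 it is stipulated that, throughout Section~\ref{sec:active_set_complexity}, the parameter $A^{k,i}$ is chosen (e.g., via~\eqref{Aki_2}) so that condition~\eqref{req_A_D}, i.e.\ $l_{j(k)} < z^{k,i}_{j(k)} < u_{j(k)}$ for $i=1,\ldots,n+1$, holds for \emph{all} $k\ge 0$. This is precisely the conclusion of Proposition~\ref{prop:z_j}, so $\kz = 0$ by construction and there is nothing to absorb: $\kA$ is then simply the first iteration at which both $\norm{x^k-x^*}_\infty < \neighj$ and~\eqref{neighborhood_A} hold, and the stated bound follows from your inversion step. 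Your proof becomes correct once you invoke this standing assumption on the initial stepsize instead of trying to derive the $\kz$ condition from the convergence rate.
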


\begin{proof}
By the definition of $\kA$ given in Theorem~\ref{th:active_set_general_1}, it holds that $\kA \ge \max\{\kj,\kz\}$ and~\eqref{neighborhood_A} is satisfied.
Recalling the definition of $\kz$ given in Proposition~\ref{prop:z_j} and the fact that~\eqref{req_A_D} holds for all $k \ge 0$,
we have $\kz = 0$, and then $\kA \ge  \max\{\kj,\kz\} = \kj$.
So, from~\eqref{neighborhood_A} and the definition of $\kj$ given in Proposition~\ref{prop:j_active_set},
it follows that $\kA$ is the first outer iteration such that
\begin{subequations}\label{kA_proof}
\begin{align}
\norm{x^k-x^*}_{\infty} & < \neighj, \quad & \forall \, k \ge \kA. \label{kA_proof_1} \\
\norm{z^{k,i}-x^*} & < \neighA, \quad i = 1,\ldots,n, & \forall \, k \ge \kA. \label{kA_proof_2}
\end{align}
\end{subequations}
By Theorem~\ref{th:rate} and~\eqref{quadratic}, for all $k \ge 1$ we hence have that
\begin{align*}
\norm{x^k-x^*}_{\infty}^2 \le \norm{x^k-x^*}^2 \le \frac2{\mu}[f(x^k) - f^*] & \le \frac{2C}{\mu k}, \\
\norm{z^{k,i}-x^*}^2 \le \frac2{\mu}[f(z^{k,i}) - f^*] \le \frac2{\mu}[f(x^k) - f^*] & \le \frac{2C}{\mu k}, \quad i = 1,\ldots,n,
\end{align*}
where, in the last chain of inequalities, we used the fact that $f(z^{k,i+1}) \le f(z^{k,i}) \le f(x^k)$, $i = 1,\ldots,n$.
Therefore, \eqref{kA_proof} holds for all $k$ such that
$\sqrt{2C/(\mu k)}$ is less than both the right-hand side of~\eqref{kA_proof_1}
and the right-hand side of~\eqref{kA_proof_2}, yielding to the upper bound for $\kA$ given in the assertion.
\end{proof}

We remark that Theorem~\ref{th:active_set_complexity_1} requires convexity and quadratic growth,
but it uses the convergence rate result stated in Theorem~\ref{th:rate}, holding for general convex objective functions.
As a consequence, we expect the upper bound provided for $\kA$ in Theorem~\ref{th:active_set_complexity_1} to be loose.
Improving the convergence rate of the algorithm under the additional quadratic growth condition
may hence be a challenging question, since it affects the active-set complexity.

\section{Additional results}\label{sec:final_remarks}
So far we have shown that AC2CD identifies $\Astar^+(x^*)$ in a finite number $\kA$ of outer iterations (provided $\{x^k\} \to x^*$),
also giving an upper bound for $\kA$ when $f$ is convex and satisfies a quadratic growth condition.

Now, we want to show that the counterparts of these results hold as well, in the sense that AC2CD is able to identify
the complement of $\Astar(x^*)$, the so called \textit{non-active set}, in a finite number $\kN$ of outer iterations,
where an upper bound for $\kN$ can be computed when $f$ is convex and satisfies~\eqref{quadratic}.
More specifically, still considering a sequence $\{x^k\} \to x^*$, we want to show that, for all $k > \kN$,
\begin{equation}\label{non_active_set_identification}
x^k_h \in (l_h,u_h), \quad \forall \, h \notin \Astar(x^*).
\end{equation}
Actually,~\eqref{non_active_set_identification} is quite obvious (it follows from the properties of the limit),
but obtaining an upper bound for $\kN$ can be of interest.
In particular, if~\eqref{active_set_identification} and~\eqref{non_active_set_identification} hold for $k > \kA$ and $k > \kN$, respectively,
for all $k > \max\{\kA,\kN\}$ we have that
\[
\Astar^+(x^*) \subseteq \bigl\{i \colon x^k_i \in \{l_i ,u_i\}\bigr\} \subseteq \Astar(x^*). \\
\]
As a consequence, if $x^*$ is non-degenerate, for all $k > \max\{\kA,\kN\}$ it holds
\begin{equation}\label{active_set_identification_nondeg}
x^k_h \in \{l_h ,u_h\} \, \Leftrightarrow \, h \in \Astar(x^*),
\end{equation}
that is, the active set is exactly identified after $\max\{\kA,\kN\}$ outer iterations.

First we show that \eqref{non_active_set_identification} holds
for all sufficiently $k$, without any assumption on the convexity of $f$, provided the whole sequence $\{x^k\}$ converges.
\begin{theorem}\label{th:active_set_general_2}
Let $\{x^k\}$ be a sequence of points produced by AC2CD and assume that $\lim_{k \to \infty} x^k = x^*$.
Define the minimum non-zero distance from the bounds at $x^*$ as
\[
\Dminstar = \min_{i \notin \Astar(x^*)} D_i(x^*),
\]
which is well defined and positive by Assumption~\ref{assumpt:l0_int_point}, and let $\kN$ be the first outer iteration such that
\[
\norm{x^k-x^*}_{\infty} < \Dminstar, \quad \forall \, k > \kN.
\]
Then, for all $k > \kN$ we have that
\[
x^k_h \in (l_h,u_h), \quad \forall \, h \notin \Astar(x^*).
\]
\end{theorem}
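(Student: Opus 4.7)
The plan is to exploit the definitions of $\Dminstar$ and of $D_h(\cdot)$ directly; the result is essentially a neighborhood argument with no algorithmic content beyond the convergence $x^k \to x^*$.

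First I would fix an arbitrary index $h \notin \Astar(x^*)$. By the definition of the active set, this means $x^*_h \in (l_h, u_h)$, so both quantities $x^*_h - l_h$ and $u_h - x^*_h$ are strictly positive. Consequently $D_h(x^*) = \min\{x^*_h - l_h, u_h - x^*_h\} > 0$, which ensures that $\Dminstar$ is well defined and strictly positive (the existence of at least one such index $h$ is guaranteed by Assumption~\ref{assumpt:l0_int_point}). The existence of the iteration $\kN$ described in the statement then follows immediately from $x^k \to x^*$ together with $\Dminstar > 0$.

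Next, for any $k > \kN$ I would use the hypothesis $\norm{x^k - x^*}_{\infty} < \Dminstar$ to bound the single coordinate $x^k_h$. By definition of $\Dminstar$ we have $\Dminstar \le D_h(x^*) \le x^*_h - l_h$ and $\Dminstar \le u_h - x^*_h$. Combining with $\abs{x^k_h - x^*_h} \le \norm{x^k - x^*}_{\infty} < \Dminstar$ yields
\[
x^k_h > x^*_h - \Dminstar \ge l_h \quad \text{and} \quad x^k_h < x^*_h + \Dminstar \le u_h,
\]
so $x^k_h \in (l_h, u_h)$, which is exactly the claim.

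There is no real obstacle: the argument is just the observation that, once the iterates enter an $\infty$-norm ball of radius $\Dminstar$ around $x^*$, no coordinate that is strictly interior at $x^*$ can reach a bound. The only small care needed is to keep the inequalities strict, which is handled by the strict inequality $\norm{x^k - x^*}_{\infty} < \Dminstar$ in the definition of $\kN$. Unlike Theorem~\ref{th:active_set_general_1}, here one does not need Proposition~\ref{prop:j_active_set}, Proposition~\ref{prop:z_j}, the Lipschitz estimates, or any control of the stepsize, since the conclusion concerns only the location of $x^k$ relative to the bounds and not the dynamics of the algorithm.
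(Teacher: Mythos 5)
Your proof is correct and follows essentially the same neighborhood argument as the paper: the key inequality $\abs{x^k_h - x^*_h} \le \norm{x^k-x^*}_{\infty} < \Dminstar \le D_h(x^*)$ is identical, and whether one then concludes $D_h(x^k) > 0$ (as the paper does) or bounds $x^k_h$ away from $l_h$ and $u_h$ coordinate-wise (as you do) is only a cosmetic difference. Your closing observation that no algorithmic machinery is needed beyond convergence of $\{x^k\}$ is also accurate.
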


\begin{proof}
Consider an outer iteration $k > \kN$ and any index $h \notin \Astar(x^*)$.
We have $\abs{x^k_h - x^*_h} \le \norm{x^k-x^*}_{\infty} < \Dminstar \le D_h(x^*)$,
implying that
\begin{equation}\label{Dh_proof_1}
x^k_h - l_h > x^*_h - l_h - D_h(x^*) \quad \text{and} \quad u_h - x^k_h > u_h - x^*_h - D_h(x^*).
\end{equation}
Therefore, we can write
\begin{equation*}
\begin{split}
D_h(x^k) & = \min\{x^k_h-l_h,u_h-x^k_h\} \overset{\text{\eqref{Dh_proof_1}}} > \min\{x^*_h-l_h,u_h-x^*_h\} - D_h(x^*) \\
         & = D_h(x^*) - D_h(x^*) = 0,
\end{split}
\end{equation*}
that is, $x^k_h \in (l_h,u_h)$.
\end{proof}

We finally give an upper bound for $\kN$ under the same assumptions used in Theorem~\ref{th:active_set_complexity_1}.
As in the previous section, also here we assume the parameter $A^k_i$ in the Armijo line search to be computed
in order to satisfy, together with~\eqref{initial_stepsize_armijo_2}, condition~\eqref{req_A_D} for all $k \ge 0$,
as explained in Subsection~\ref{subsec:stepsize}.

\begin{theorem}\label{th:active_set_complexity_2}
The following upper bound holds for $\kN$ appearing in Theorem~\ref{th:active_set_general_2}
if $f$ is convex over $\R^n$ and statisfies~\eqref{quadratic}:
\[
\kN \le \biggl\lfloor\frac{2C}{\mu} \bigl(\Dminstar\bigr)^{-2}\biggr\rfloor + 1,
\]
where $C \ge 0$ is the constant of the sublinear convergence rate defined in Theorem~\ref{th:rate},
and $\Dminstar>0$ is the minimum non-zero distance from the bounds at $x^*$, defined as in Theorem~\ref{th:active_set_general_2}.
\end{theorem}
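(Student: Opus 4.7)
The plan is to follow closely the strategy used for Theorem~\ref{th:active_set_complexity_1}, specialized to the single neighborhood condition that defines $\kN$ in Theorem~\ref{th:active_set_general_2}. Two ingredients suffice: the non-asymptotic sublinear rate from Theorem~\ref{th:rate}, and the standard consequence of $\mu$-strong convexity that converts function-value gaps into iterate distances. Unlike Theorem~\ref{th:active_set_complexity_1}, no conditions on the inner iterates $z^{k,i}$ are needed, and neither $\kj$ nor $\kz$ enter the bound.

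First I would combine the $\mu$-strong convexity of $f$ on $\LS$ with the optimality of $x^*$: for every $k \ge 0$,
\[
f(x^k) \ge f^* + \nabla f(x^*)^T(x^k - x^*) + \frac{\mu}{2}\norm{x^k - x^*}^2 \ge f^* + \frac{\mu}{2}\norm{x^k - x^*}^2,
\]
where the second inequality uses $\nabla f(x^*)^T(x^k - x^*) \ge 0$, which holds because $x^*$ minimizes $f$ over $\F$ and the sequence $\{x^k\}$ stays in $\F$. Combining this with the sublinear rate $f(x^k) - f^* \le C/k$ from Theorem~\ref{th:rate}, and using $\norm{\cdot}_\infty \le \norm{\cdot}$, I get
\[
\norm{x^k - x^*}_\infty^2 \le \norm{x^k - x^*}^2 \le \frac{2C}{\mu k}, \quad \forall\, k \ge 1.
\]

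To conclude, the defining condition of $\kN$ in Theorem~\ref{th:active_set_general_2}, namely $\norm{x^k - x^*}_\infty < \Dminstar$ for all $k > \kN$, is implied by $2C/(\mu k) < (\Dminstar)^2$, i.e., by $k > 2C/[\mu (\Dminstar)^2]$. Since this strict inequality holds for every integer $k \ge \lfloor 2C/[\mu(\Dminstar)^2] \rfloor + 1$, setting $\kN$ equal to this quantity is sufficient, which yields the stated upper bound. I do not anticipate any real obstacle: the single-condition version of the argument avoids the maximum over two neighborhood radii that appeared in Theorem~\ref{th:active_set_complexity_1}, and the passage from $\ell_2$ to $\ell_\infty$ control is immediate from \eqref{seminorm_le_norm}-style monotonicity of the norms.
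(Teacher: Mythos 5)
Your proof is correct and follows essentially the same route as the paper, which simply invokes the argument of Theorem~\ref{th:active_set_complexity_1}: strong convexity plus optimality of $x^*$ give $\norm{x^k-x^*}_\infty^2 \le \norm{x^k-x^*}^2 \le \tfrac{2}{\mu}(f(x^k)-f^*) \le \tfrac{2C}{\mu k}$, and comparing with $\Dminstar$ yields the stated floor-plus-one bound. You merely spell out the details (the variational inequality $\nabla f(x^*)^T(x^k-x^*)\ge 0$ and the integer-threshold step) that the paper leaves implicit.
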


\begin{proof}
Reasoning as in the proof of Theorem~\ref{th:active_set_complexity_1},
the desired result follows from Theorem~\ref{th:active_set_general_2} and the fact that
$\norm{x^k-x^*}^2_{\infty} \le \norm{x^k-x^*}^2 \le \frac{2C}{\mu k}$ for all $k \ge 1$.
\end{proof}

The same remarks stated after Theorem~\ref{th:active_set_complexity_1} hold for Theorem~\ref{th:active_set_complexity_2} as well.
Namely, we expect the upper bound provided for $\kN$ to be loose, since it requires convexity and quadratic growth,
but it uses the convergence rate result of Theorem~\ref{th:rate}, holding for general convex objective functions.

\begin{appendices}
\section{Proofs of the technical results of Section~\ref{technical_results}}\label{app:technical results}
\begin{proof}[Proof of Lemma~\ref{lemma:prod}]
For all $x \in \F$ we have $x_j = b - \sum_{i \ne j} x_i$, $j=1,\ldots,n$.
So,
\begin{align*}
v^T (x'-x'') & = \sum_{i \ne j} v_i (x'_i-x''_i) + v_j (x'_j-x''_j) \\
             & = \sum_{i \ne j} v_i (x'_i-x''_i) - v_j \Biggl(\sum_{i \ne j} x_i' - \sum_{i \ne j} x_i''\Biggr)
             = \sum_{i \ne j} (v_i-v_j) (x'_i-x''_j).
\end{align*}
\end{proof}

\begin{proof}[Proof of Lemma~\ref{lemma:lips_const}]
Fix $j \in \{1,\ldots,n\}$ and $x',x'' \in \F$.
For all $i = 1,\dots,n$ and $x \in \F$, let $\phi_{i,j,x}$ be the functions appearing in~\eqref{local_lips_phi}.
Pick any $h \ne j$ and, from known results on functions with Lipschitz continuous derivatives~\cite{nesterov:2013}, we can write
\begin{align*}
f(x + t (e_h-e_j)) & = \phi_{h,j,x} (t) \le \phi_{h,j,x} (0) + t \dot \phi_{h,j,x} (0) + \frac{L_{h,j}} 2 t^2 \\
                   & = f(x) + t \nabla f(x)^T (e_h-e_j) + \frac{L_{h,j}} 2 t^2, \quad \forall \, t \in \R.
\end{align*}
Using $t = \dfrac 1{L_{h,j}} (\nabla_j f(x) - \nabla_h f(x))$, we get
\begin{equation}\label{f_diff}
f(x) - f\Bigl(x + \frac 1{L_{h,j}} (\nabla_j f(x) - \nabla_h f(x)) (e_h-e_j)\Bigr) \ge \frac 1{2L_{h,j}} (\nabla_h f(x) - \nabla_j f(x))^2.
\end{equation}
Let $\bar f = \inf_{x \in \R^n} f(x)$. For all $x \in \R^n$ we can write
\begin{equation}\label{f-fmin}
\begin{split}
f(x) - \bar f & \overset{\phantom{\text{($\ast$)}}} \ge f(x) - f\Bigl(x + \frac 1{L_{h,j}} (\nabla_j f(x) - \nabla_h f(x)) (e_h-e_j)\Bigr) \\
              & \overset{\phantom{\text{($\ast$)}}} \ge \frac 1 2 \max_{i \ne j} \frac 1{L_{i,j}} (\nabla_i f(x) - \nabla_j f(x))^2 \\
              & \overset{\text{($\ast$)}} \ge \frac 1 {2 \sum_{i \ne j} L_{i,j}} \sum_{i=1}^n (\nabla_i f(x) - \nabla_j f(x))^2 \\
              & \overset{\phantom{\text{($\ast$)}}} = \frac 1 {2 L_j} \sum_{i \ne j} (\nabla_i f(x) - \nabla_j f(x))^2
                = \frac 1 {2 L_j} \bigl\|\nabla f(x)-\nabla_j f(x) e\bigr\|_{\innprod j}^2,
\end{split}
\end{equation}
where the second inequality follows~\eqref{f_diff}, whereas the inequality $\text{($\ast$)}$ follows from the fact that
\[
\max_{i=1,\ldots,r} \frac{a_i}{b_i} \ge \frac 1{b_1+\ldots+b_r}\sum_{i=1}^n a_i,
\]
for all $a_1,\ldots,a_r \in \R$ and $b_1,\ldots,b_r > 0$.

Now, define the convex function $\psi_1(x) = f(x) - f(x') - \nabla f(x')^T (x-x')$.
Since $\nabla \psi_1(x) = \nabla f(x) - \nabla f(x')$, for all $x \in \F$, $i \in \{1,\ldots,n\}$ and $t,s \in \R$, we can write
\begin{align*}
  & \abs{\nabla \psi_1(x + t (e_i-e_j))^T (e_i-e_j) - \nabla \psi_1(x + s (e_i-e_j))^T (e_i-e_j)} \\
= & \abs{\nabla f(x + t (e_i-e_j))^T (e_i-e_j) - \nabla f(x + s (e_i-e_j))^T (e_i-e_j)} \le L_{i,j} \abs{t-s},
\end{align*}
where the last inequality follows from the fact that $L_{i,j}$ are local Lipschitz constants for $\nabla f(x)$.
Therefore, $L_{i,j}$ are also local Lipschitz constants for $\nabla \psi_1$.
Consequently, we can use~\eqref{f-fmin} with $f$ replaced by $\psi_1$. Observing that $\min_{x \in \R^n} \psi_1(x) = 0$, we obtain
\begin{align*}
\psi_1(x) & \ge \frac 1 {2 L_j} \bigl\|\nabla \psi_1(x)-\nabla_j \psi_1(x) e\bigr\|_{\innprod j}^2 \\
          & = \frac 1 {2 L_j} \bigl\|(\nabla f(x)-\nabla_j f(x)e)-(\nabla f(x')-\nabla_j f(x')e)\bigr\|_{\innprod j}^2, \quad \forall \, x \in \R^n.
\end{align*}
Using $x = x''$ in the above relation, we get
\begin{equation}\label{psi1}
\psi_1(x'') \ge \frac 1 {2 L_j} \bigl\|(\nabla f(x'')-\nabla_j f(x'')e)-(\nabla f(x')-\nabla_j f(x')e)\bigr\|_{\innprod j}^2.
\end{equation}
Defining the function $\psi_2(x) = f(x) - f(x'') - \nabla f(x'')^T (x-x'')$, we can reason as above and we obtain
\begin{equation}\label{psi2}
\psi_2(x') \ge \frac 1 {2 L_j} \bigl\|(\nabla f(x')-\nabla_j f(x')e)-(\nabla f(x'')-\nabla_j f(x'')e)\bigr\|_{\innprod j}^2.
\end{equation}
Summing~\eqref{psi1} and~\eqref{psi2}, we get
\[
\bigl\|[\nabla f(x')-\nabla_j f(x')e]-[\nabla f(x'')-\nabla_j f(x'')e]\bigr\|_{\innprod j}^2 \le L_j [\nabla f(x') - \nabla f(x'')]^T (x'-x'').
\]
So, to obtain the desired result we have to show that
$[\nabla f(x') - \nabla f(x'')]^T (x'-x'')$ is less than or equal to
\begin{equation}\label{prod_seminorm}
\bigl\|[\nabla f(x') - \nabla_j f(x')e]-[\nabla f(x'') - \nabla_j f(x'')e]\bigr\|_{\innprod j} \, \seminormj{x'-x''}.
\end{equation}
This can be achieved by using Lemma~\ref{lemma:prod} first with $v = \nabla f(x')$ and then with $v = \nabla f(x'')$, in order
to rewrite $[\nabla f(x') - \nabla f(x'')]^T (x'-x'')$ as
\[
\innprod{[\nabla f(x') - \nabla_j f(x')e]-[\nabla f(x'') - \nabla_j f(x'')e],x'-x''}_j.
\]
Hence, by using inequality~\eqref{cauchy} we obtain that the above quantity is less than or equal to~\eqref{prod_seminorm}.
\end{proof}

\begin{proof}[Proof of Corollary~\ref{corol:lips}]
From~\eqref{abs_vs_seminorm} and the definition of $g^{k,i}$ given in~\eqref{g_d_def}, for all $v \in \R^n$ we have that
\[
\abs{\nabla_{p^k_i} f(v)-\nabla_{j(k)} f(v)+g^{k,i}} \le \bigl\|[\nabla f(v) - \nabla_{j(k)} f(v) e]-[\nabla f(z^{k,i}) - \nabla_{j(k)} f(z^{k,i}) e]\bigr\|_{\innprod{j(k)}}.
\]
Using Lemma~\ref{lemma:lips_const}, the desired result is obtained.
\end{proof}

\begin{proof}[Proof of Lemma~\ref{lemma:lips_descent}]
Fix $j \in \{1,\ldots,n\}$ and $x',x'' \in \F$.
From the mean value theorem and using Lemma~\ref{lemma:prod} with $v = \nabla f(x' + t(x''-x'))$, we have
\begin{equation*}
\begin{split}
f(x'') - f(x') & = \int_0^1 \nabla f(x' + t(x''-x'))^T (x''-x') \, dt \\
       & = \int_0^1 \innprod{\nabla f(x' + t(x''-x'))-\nabla_j f(x' + t(x''-x'))e, x''-x'}_j \, dt.
\end{split}
\end{equation*}
The integrand in the last term of the above chain of equalities can be rewritten as the sum of $\innprod{\nabla f(x')-\nabla_j f(x')e,x''-x'}_j$ and
\[
\innprod{[\nabla f(x' + t(x''-x'))-\nabla_j f(x' + t(x''-x'))e]-[\nabla f(x')-\nabla_j f(x')e], x''-x'}_j,
\]
and the latter, by using inequality~\eqref{cauchy} and Lemma~\ref{lemma:lips_const}, is less than or equal to $t L_j \seminormj{x'-x''}^2$.
Therefore,
\begin{equation*}
\begin{split}
f(x'') & \le f(x') + \innprod{\nabla f(x')-\nabla_j f(x')e,x''-x'}_j + L_j \seminormj{x'-x''}^2 \int_0^1 t \, dt \\
       & = f(x') + \innprod{\nabla f(x')-\nabla_j f(x')e,x''-x'}_j + \frac{L_j}2 \seminormj{x'-x''}^2.
\end{split}
\end{equation*}
Using Lemma~\ref{lemma:prod} with $v = \nabla f(x')$, the desired result is obtained.
\end{proof}
\end{appendices}

\section*{Acknowledgments}
The author would like to thank the two anonymous reviewers for their comments and suggestions.

\bibliography{cristofari_R2}

\begin{thebibliography}{51}
\providecommand{\natexlab}[1]{#1}
\providecommand{\url}[1]{\texttt{#1}}
\expandafter\ifx\csname urlstyle\endcsname\relax
  \providecommand{\doi}[1]{doi: #1}\else
  \providecommand{\doi}{doi: \begingroup \urlstyle{rm}\Url}\fi

\bibitem[Beck(2014)]{beck:2014}
A.~Beck.
\newblock {The 2-coordinate descent method for solving double-sided simplex
  constrained minimization problems}.
\newblock \emph{J. Optim. Theory Appl.}, 162\penalty0 (3):\penalty0 892--919,
  2014.

\bibitem[Beck and Tetruashvili(2013)]{beck:2013}
A.~Beck and L.~Tetruashvili.
\newblock {On the convergence of block coordinate descent type methods}.
\newblock \emph{SIAM J. Optim.}, 23\penalty0 (4):\penalty0 2037--2060, 2013.

\bibitem[Bertsekas(1976)]{bertsekas:1976}
D.~P. Bertsekas.
\newblock {On the Goldstein-Levitin-Polyak gradient projection method}.
\newblock \emph{IEEE Trans. Automat. Control}, 21\penalty0 (2):\penalty0
  174--184, 1976.

\bibitem[Birgin and Mart{\'\i}nez(2002)]{birgin:2002}
E.~G. Birgin and J.~M. Mart{\'\i}nez.
\newblock {Large-scale active-set box-constrained optimization method with
  spectral projected gradients}.
\newblock \emph{Comput. Optim. Appl.}, 23\penalty0 (1):\penalty0 101--125,
  2002.

\bibitem[Bomze et~al.(2019)Bomze, Rinaldi, and Bul{\`o}]{bomze:2019a}
I.~M. Bomze, F.~Rinaldi, and S.~R. Bul{\`o}.
\newblock {First-order Methods for the Impatient: Support Identification in
  Finite Time with Convergent Frank--Wolfe Variants}.
\newblock \emph{SIAM J. Optim.}, 29\penalty0 (3):\penalty0 2211--2226, 2019.

\bibitem[Bomze et~al.(2020)Bomze, Rinaldi, and Zeffiro]{bomze:2019b}
I.~M. Bomze, F.~Rinaldi, and D.~Zeffiro.
\newblock {Active Set Complexity of the Away-Step Frank-Wolfe Algorithm}.
\newblock \emph{SIAM J. Optim.}, 30\penalty0 (3):\penalty0 2470--2500, 2020.

\bibitem[Burke(1990)]{burke:1990}
J.~Burke.
\newblock {On the identification of active constraints II: The nonconvex case}.
\newblock \emph{SIAM J. Numer. Anal.}, 27\penalty0 (4):\penalty0 1081--1102,
  1990.

\bibitem[Burke and Mor{\'e}(1988)]{burke:1988}
J.~V. Burke and J.~J. Mor{\'e}.
\newblock {On the identification of active constraints}.
\newblock \emph{SIAM J. Numer. Anal.}, 25\penalty0 (5):\penalty0 1197--1211,
  1988.

\bibitem[Burke and Mor{\'e}(1994)]{burke:1994}
J.~V. Burke and J.~J. Mor{\'e}.
\newblock {Exposing constraints}.
\newblock \emph{SIAM J. Optim.}, 4\penalty0 (3):\penalty0 573--595, 1994.

\bibitem[Calamai and Mor{\'e}(1987)]{calamai:1987}
P.~H. Calamai and J.~J. Mor{\'e}.
\newblock {Projected gradient methods for linearly constrained problems}.
\newblock \emph{Math. Program.}, 39\penalty0 (1):\penalty0 93--116, 1987.

\bibitem[Clarkson(2010)]{clarkson:2010}
K.~L. Clarkson.
\newblock {Coresets, sparse greedy approximation, and the Frank-Wolfe
  algorithm}.
\newblock \emph{ACM Trans. Algorithms}, 6\penalty0 (4):\penalty0 1--30, 2010.

\bibitem[Cristofari(2019)]{cristofari:2019}
A.~Cristofari.
\newblock {An almost cyclic 2-coordinate descent method for singly linearly
  constrained problems}.
\newblock \emph{Comput. Optim. Appl.}, 73\penalty0 (2):\penalty0 411--452,
  2019.

\bibitem[Cristofari et~al.(2017)Cristofari, De~Santis, Lucidi, and
  Rinaldi]{cristofari:2017}
A.~Cristofari, M.~De~Santis, S.~Lucidi, and F.~Rinaldi.
\newblock {A Two-Stage Active-Set Algorithm for Bound-Constrained
  Optimization}.
\newblock \emph{J. Optim. Theory Appl.}, 172\penalty0 (2):\penalty0 369--401,
  2017.

\bibitem[Cristofari et~al.(2020{\natexlab{a}})Cristofari, De~Santis, Lucidi,
  and Rinaldi]{cristofari:2017simplex}
A.~Cristofari, M.~De~Santis, S.~Lucidi, and F.~Rinaldi.
\newblock {An active-set algorithmic framework for non-convex optimization
  problems over the simplex}.
\newblock \emph{Comput. Optim. Appl.}, 77:\penalty0 57--89, 2020{\natexlab{a}}.

\bibitem[Cristofari et~al.(2020{\natexlab{b}})Cristofari, Rinaldi, and
  Tudisco]{cristofari:2019total}
A.~Cristofari, F.~Rinaldi, and F.~Tudisco.
\newblock {Total variation based community detection using a nonlinear
  optimization approach}.
\newblock \emph{SIAM J. Appl. Math.}, 80\penalty0 (3):\penalty0 1392--1419,
  2020{\natexlab{b}}.

\bibitem[Daniilidis et~al.(2009)Daniilidis, Sagastiz{\'a}bal, and
  Solodov]{daniilidis:2009}
A.~Daniilidis, C.~Sagastiz{\'a}bal, and M.~Solodov.
\newblock {Identifying structure of nonsmooth convex functions by the bundle
  technique}.
\newblock \emph{SIAM J. Optim.}, 20\penalty0 (2):\penalty0 820--840, 2009.

\bibitem[De~Santis et~al.(2016)De~Santis, Lucidi, and Rinaldi]{desantis:2016}
M.~De~Santis, S.~Lucidi, and F.~Rinaldi.
\newblock {A Fast Active Set Block Coordinate Descent Algorithm for
  $\ell_1$-Regularized Least Squares}.
\newblock \emph{SIAM J. Optim.}, 26\penalty0 (1):\penalty0 781--809, 2016.

\bibitem[Duchi et~al.(2021)Duchi, Ruan, et~al.]{duchi:2016}
J.~C. Duchi, F.~Ruan, et~al.
\newblock {Asymptotic optimality in stochastic optimization}.
\newblock \emph{Ann. Statist.}, 49\penalty0 (1):\penalty0 21--48, 2021.

\bibitem[Dunn(1987)]{dunn:1987}
J.~C. Dunn.
\newblock {On the convergence of projected gradient processes to singular
  critical points}.
\newblock \emph{J. Optim. Theory Appl.}, 55\penalty0 (2):\penalty0 203--216,
  1987.

\bibitem[Facchinei and Pang(2003)]{facchinei:2003}
F.~Facchinei and J.-S. Pang.
\newblock \emph{{Finite-dimensional variational inequalities and
  complementarity problems}}.
\newblock Springer Science \& Business Media, 2003.

\bibitem[Facchinei et~al.(1998)Facchinei, J{\'u}dice, and
  Soares]{facchinei:1998}
F.~Facchinei, J.~J{\'u}dice, and J.~Soares.
\newblock {An active set Newton algorithm for large-scale nonlinear programs
  with box constraints}.
\newblock \emph{SIAM J. Optim.}, 8\penalty0 (1):\penalty0 158--186, 1998.

\bibitem[Gafni and Bertsekas(1984)]{gafni:1984}
E.~M. Gafni and D.~P. Bertsekas.
\newblock {Two-metric projection methods for constrained optimization}.
\newblock \emph{SIAM J. Control Optim.}, 22\penalty0 (6):\penalty0 936--964,
  1984.

\bibitem[Hager and Zhang(2006)]{hager:2006}
W.~W. Hager and H.~Zhang.
\newblock {A new active set algorithm for box constrained optimization}.
\newblock \emph{SIAM J. Optim.}, 17\penalty0 (2):\penalty0 526--557, 2006.

\bibitem[Hare(2011)]{hare:2011}
W.~Hare.
\newblock {Identifying active manifolds in regularization problems}.
\newblock In \emph{Fixed-Point Algorithms for Inverse Problems in Science and
  Engineering}, pages 261--271. Springer, 2011.

\bibitem[Hare(2009)]{hare:2009}
W.~L. Hare.
\newblock {A proximal method for identifying active manifolds}.
\newblock \emph{Comput. Optim. Appl.}, 43\penalty0 (2):\penalty0 295--306,
  2009.

\bibitem[Hare and Lewis(2004)]{hare:2004}
W.~L. Hare and A.~S. Lewis.
\newblock {Identifying active constraints via partial smoothness and
  prox-regularity}.
\newblock \emph{J. Convex Anal.}, 11\penalty0 (2):\penalty0 251--266, 2004.

\bibitem[Hsieh et~al.(2008)Hsieh, Chang, Lin, Keerthi, and
  Sundararajan]{hsieh:2008}
C.-J. Hsieh, K.-W. Chang, C.-J. Lin, S.~S. Keerthi, and S.~Sundararajan.
\newblock {A dual coordinate descent method for large-scale linear SVM}.
\newblock In \emph{Proceedings of the 25th international conference on Machine
  learning}, pages 408--415, 2008.

\bibitem[Jaggi and Lacoste-Julien(2015)]{jaggi:2015}
M.~Jaggi and S.~Lacoste-Julien.
\newblock {On the global linear convergence of frank-wolfe optimization
  variants}.
\newblock \emph{Advances in Neural Information Processing Systems}, 28, 2015.

\bibitem[Lee and Wright(2012)]{lee:2012}
S.~Lee and S.~J. Wright.
\newblock {Manifold identification in dual averaging for regularized stochastic
  online learning}.
\newblock \emph{J. Mach. Learn. Res.}, 13\penalty0 (Jun):\penalty0 1705--1744,
  2012.

\bibitem[Lewis and Wright(2011)]{lewis:2011}
A.~S. Lewis and S.~J. Wright.
\newblock {Identifying activity}.
\newblock \emph{SIAM J. Optim.}, 21\penalty0 (2):\penalty0 597--614, 2011.

\bibitem[Lewis and Torczon(2010)]{lewis:2010}
R.~M. Lewis and V.~Torczon.
\newblock {Active set identification for linearly constrained minimization
  without explicit derivatives}.
\newblock \emph{SIAM J. Optim.}, 20\penalty0 (3):\penalty0 1378--1405, 2010.

\bibitem[Liang et~al.(2017)Liang, Fadili, and Peyr{\'e}]{liang:2017}
J.~Liang, J.~Fadili, and G.~Peyr{\'e}.
\newblock {Activity Identification and Local Linear Convergence of
  Forward--Backward-type Methods}.
\newblock \emph{SIAM J. Optim.}, 27\penalty0 (1):\penalty0 408--437, 2017.

\bibitem[Lin et~al.(2009)Lin, Lucidi, Palagi, Risi, and Sciandrone]{lin:2009}
C.-J. Lin, S.~Lucidi, L.~Palagi, A.~Risi, and M.~Sciandrone.
\newblock {Decomposition algorithm model for singly linearly-constrained
  problems subject to lower and upper bounds}.
\newblock \emph{J. Optim. Theory Appl.}, 141\penalty0 (1):\penalty0 107--126,
  2009.

\bibitem[Luo and Tseng(1992)]{luo:1992}
Z.-Q. Luo and P.~Tseng.
\newblock {On the convergence of the coordinate descent method for convex
  differentiable minimization}.
\newblock \emph{J. Optim. Theory Appl.}, 72\penalty0 (1):\penalty0 7--35, 1992.

\bibitem[Luo and Tseng(1993)]{luo:1993}
Z.-Q. Luo and P.~Tseng.
\newblock {On the convergence rate of dual ascent methods for linearly
  constrained convex minimization}.
\newblock \emph{Math. Oper. Res.}, 18\penalty0 (4):\penalty0 846--867, 1993.

\bibitem[Mifflin and Sagastiz{\'a}bal(2002)]{mifflin:2002}
R.~Mifflin and C.~Sagastiz{\'a}bal.
\newblock {Proximal points are on the fast track}.
\newblock \emph{J. Convex Anal.}, 9\penalty0 (2):\penalty0 563--580, 2002.

\bibitem[Necoara(2013)]{necoara:2013}
I.~Necoara.
\newblock {Random coordinate descent algorithms for multi-agent convex
  optimization over networks}.
\newblock \emph{IEEE Trans. Automat. Control}, 58\penalty0 (8):\penalty0
  2001--2012, 2013.

\bibitem[Necoara and Patrascu(2014)]{necoara:2014}
I.~Necoara and A.~Patrascu.
\newblock {A random coordinate descent algorithm for optimization problems with
  composite objective function and linear coupled constraints}.
\newblock \emph{Computational Optimization and Applications}, 57\penalty0
  (2):\penalty0 307--337, 2014.

\bibitem[Necoara et~al.(2017)Necoara, Nesterov, and Glineur]{necoara:2017}
I.~Necoara, Y.~Nesterov, and F.~Glineur.
\newblock {Random block coordinate descent methods for linearly constrained
  optimization over networks}.
\newblock \emph{Journal of Optimization Theory and Applications}, 173\penalty0
  (1):\penalty0 227--254, 2017.

\bibitem[Necoara et~al.(2019)Necoara, Nesterov, and Glineur]{necoara:2019}
I.~Necoara, Y.~Nesterov, and F.~Glineur.
\newblock {Linear convergence of first order methods for non-strongly convex
  optimization}.
\newblock \emph{Math. Program.}, 175\penalty0 (1):\penalty0 69--107, 2019.

\bibitem[Nesterov(2013)]{nesterov:2013}
Y.~Nesterov.
\newblock \emph{{Introductory lectures on convex optimization: A basic
  course}}, volume~87.
\newblock Springer Science \& Business Media, 2013.

\bibitem[Nutini et~al.(2015)Nutini, Schmidt, Laradji, Friedlander, and
  Koepke]{nutini:2015}
J.~Nutini, M.~Schmidt, I.~Laradji, M.~Friedlander, and H.~Koepke.
\newblock {Coordinate descent converges faster with the gauss-southwell rule
  than random selection}.
\newblock In \emph{International Conference on Machine Learning}, pages
  1632--1641. PMLR, 2015.

\bibitem[Nutini et~al.(2017)Nutini, Laradji, and Schmidt]{nutini:2017}
J.~Nutini, I.~Laradji, and M.~Schmidt.
\newblock {Let's Make Block Coordinate Descent Go Fast: Faster Greedy Rules,
  Message-Passing, Active-Set Complexity, and Superlinear Convergence}.
\newblock \emph{preprint}, https://arxiv.org/abs/1712.08859, 2017.

\bibitem[Nutini et~al.(2019)Nutini, Schmidt, and Hare]{nutini:2019}
J.~Nutini, M.~Schmidt, and W.~Hare.
\newblock {``Active-set complexity'' of proximal gradient: How long does it
  take to find the sparsity pattern?}
\newblock \emph{Optim. Lett.}, 13\penalty0 (4):\penalty0 645--655, 2019.

\bibitem[Ortega and Rheinboldt(1970)]{ortega:1970}
J.~M. Ortega and W.~C. Rheinboldt.
\newblock \emph{{Iterative solution of nonlinear equations in several
  variables}}, volume~30.
\newblock Siam, 1970.

\bibitem[Patrascu and Necoara(2015)]{patrascu:2015}
A.~Patrascu and I.~Necoara.
\newblock {Efficient random coordinate descent algorithms for large-scale
  structured nonconvex optimization}.
\newblock \emph{J. Global Optim.}, 61\penalty0 (1):\penalty0 19--46, 2015.

\bibitem[Poon et~al.(2018)Poon, Liang, and Schoenlieb]{poon:2018}
C.~Poon, J.~Liang, and C.~Schoenlieb.
\newblock {Local convergence properties of SAGA/Prox-SVRG and acceleration}.
\newblock In \emph{Proc. Mach. Learn. Res. (PMLR)}, pages 4124--4132, 2018.

\bibitem[She and Schmidt(2017)]{she:2017}
J.~She and M.~Schmidt.
\newblock {Linear convergence and support vector identification of sequential
  minimal optimization}.
\newblock In \emph{10th NIPS Workshop on Optimization for Machine Learning},
  volume~5, 2017.

\bibitem[Sun et~al.(2019)Sun, Jeong, Nutini, and Schmidt]{sun:2019}
Y.~Sun, H.~Jeong, J.~Nutini, and M.~Schmidt.
\newblock {Are we there yet? manifold identification of gradient-related
  proximal methods}.
\newblock In \emph{The 22nd International Conference on Artificial Intelligence
  and Statistics}, pages 1110--1119, 2019.

\bibitem[Wright(1993)]{wright:1993}
S.~J. Wright.
\newblock {Identifiable surfaces in constrained optimization}.
\newblock \emph{SIAM J. Control Optim.}, 31\penalty0 (4):\penalty0 1063--1079,
  1993.

\bibitem[Wright(2012)]{wright:2012}
S.~J. Wright.
\newblock {Accelerated block-coordinate relaxation for regularized
  optimization}.
\newblock \emph{SIAM J. Optim.}, 22\penalty0 (1):\penalty0 159--186, 2012.

\end{thebibliography}

\end{document}